\documentclass[a4paper,12pt]{article}
\usepackage[top=2cm, bottom=2cm, outer=2cm, inner=2cm, headsep=14pt]{geometry}
\usepackage[dvips]{epsfig,graphicx}
\usepackage{amsmath,amsfonts}
\usepackage{color}
\usepackage{url}

\usepackage{tikz}
\usepackage{enumitem}
\usepackage{mleftright} 

\usepackage{float}
\usepackage{multirow}
\usepackage{amssymb}
\usepackage{amscd}



\newenvironment{proof}{{\noindent\it Proof. }}{\nopagebreak\hspace*{0.5cm}\hfill$\hbox{\rule{3pt}{6pt}}$\smallskip}

\newcommand{\RR}{{\mathbb R}}
\newcommand{\N}{{\mathcal N}}
\newcommand{\A}{{\mathcal A}}
\newcommand{\R}{{\mathcal R}}
\newcommand{\F}{{\mathcal F}}
\newcommand{\C}{{\mathcal C}}
\newcommand{\D}{{\mathcal D}}
\newcommand{\M}{{\mathcal M}}
\newcommand{\NN}{{\mathbb N}}
\newcommand{\ZZ}{{\mathbb Z}}
\newcommand{\CC}{{\mathbb C}}
\newcommand{\FF}{{\mathbb C}}

\newcommand{\G}{\Gamma}

\def\ww{{\boldsymbol w}}
\def\tt{{\boldsymbol t}}
\def\pp{{\boldsymbol p}}
\def\jj{{\boldsymbol j}}

\def\0{{\boldsymbol 0}}
\newcommand{\W}{{\cal W}}
\newcommand{\V}{{\cal V}}
\newcommand{\T}{{\cal T}}

\newcommand{\Mat}{\hbox{\rm Mat}}

\newcommand{\spec}{\mbox{\rm sp}}

\def\dist{{\mbox{\rm dist}}}
\def\dgr{{\mbox{\rm dgr}}}
\def\rank{{\mbox{\rm rank}}}
\def\summ{{\mbox{\rm sum}}}
\def\trace{{\mbox{\rm tr}}}
\def\P{{\cal P}}

\newtheorem{theorem}{Theorem}[section]
\newtheorem{lemma}[theorem]{Lemma}
\newtheorem{corollary}[theorem]{Corollary}
\newtheorem{proposition}[theorem]{Proposition}
\newtheorem{definition}[theorem]{Definition}

\newtheorem{remark}[theorem]{Remark}
\newtheorem{example}[theorem]{Example}

\newtheorem{problem}{Problem}[section]
\newtheorem{algorithm}[theorem]{Algorithm}
\newtheorem{comment}[theorem]{Comment}

\newfont\fiverm{cmr5}
\def\eeq{\end{equation}}
\def\lbeq#1{\begin{equation} \label{#1}}


\newcommand{\ds}{\displaystyle}
\def\span{{\mbox{\rm span}}}
\def\im{{\mbox{\rm im}}}
\def\ol{\overline}

\title{On symmetric association schemes \\
and associated quotient-polynomial graphs\footnote{This research has been partially supported by
AGAUR from the Catalan Government under project 2017SGR1087 and by MICINN from the Spanish Government under project PGC2018-095471-B-I00. The second author acknowledges the financial support from the Slovenian Research Agency (research program P1-0285 and research project J1-1695).}}


\author{M. A. Fiol\\
	{\small Departament de Matem{\`a}tiques} \\
   	{\small     Universitat Polit{\' e}cnica de Catalunya}\\
    	{\small    Barcelona Graduate School of Mathematics} \\
      	{\small  Catalonia, Spain} \\
       	{\small {miguel.angel.fiol@upc.edu}} \and
Safet Penji{\'c}\\
      	{\small  University of Primorska} \\
     	{\small  Muzejski trg 2 }\\
    	{\small    6000 Koper, Slovenia} \\
     	{\small   {Safet.Penjic@iam.upr.si}}}

\begin{document}

\maketitle

\begin{abstract}
Let $\G$ denote an undirected, connected, regular graph with vertex set $X$, adjacency matrix $A$, and ${d+1}$ distinct eigenvalues. Let $\A=\A(\G)$ denote the subalgebra of $\Mat_X(\CC)$ generated by $A$. We refer to $\A$ as the {\it adjacency algebra} of $\G$. In this paper we investigate algebraic and combinatorial structure of $\G$ for which the adjacency algebra $\A$ is closed under Hadamard multiplication. In particular, under this simple assumption, we show the following: (i) $\A$ has a standard basis $\{I,F_1,\ldots,F_d\}$; (ii) for every vertex there exists identical distance-faithful intersection diagram of $\G$ with $d+1$ cells; (iii) the  graph $\G$ is quotient-polynomial; and (iv) if we pick $F\in \{I,F_1,\ldots,F_d\}$ then $F$ has $d+1$ distinct eigenvalues if and only if $\span\{I,F_1,\ldots,F_d\}=\span\{I,F,\ldots,F^d\}$. We describe the combinatorial structure of quotient-polynomial graphs with diameter $2$ and $4$ distinct eigenvalues. As a consequence of the technique from the paper we give an algorithm which computes the number of distinct eigenvalues of any Hermitian matrix  using only elementary operations.  When such a matrix is the adjacency matrix of a graph $\G$, a simple variation of the algorithm allow us to decide wheter $\G$ is distance-regular or not. In this context,  we also propose an algorithm to find which distance-$i$ matrices are polynomial in $A$, giving also these polynomials.
\end{abstract}


\smallskip
{\small
\noindent
{\it{MSC:}} 05E30, 05C50

\smallskip
\noindent
{\it{Keywords:}} Symmetric association scheme, adjacency algebra, quotient-polynomial graph, intersection diagram.
}


\section{Introduction}
\label{1a}


A matrix algebra is a vector space of matrices which is closed with respect to matrix multiplication. Let $X$ denote a finite set and $\Mat_X(\CC)$ the set of complex square matrices with rows and columns indexed by $X$ (or full algebra denoted by $\CC_{|X|}$). The subalgebras of  $\Mat_{X}(\CC)$ that are closed under (elementwise) Hadamard multiplication, and containing the all-ones matrix $J$, are known as coherent algebras. The concept was developed independently by Weisfeiler and  Lehman in \cite{WL} and by  Higman in \cite{Hcc, Hca}. A good introduction to the topic may be found in \cite{KMMZ}. In the literature, a rich theory has been built up around this concept, and much more can be found in \cite{IJR, JKM, KCG, Swa, SAD, ST, SS, XB}. It is well known that every coherent algebra $\C$ is semisimple (see, for example, \cite[Section 2]{FS}) and that has a standard basis $\{N_0,N_1,\ldots,N_r\}$ consisting of the primitive idempotents of $\C$ viewed as a subalgebra of $\Mat_X(\CC)$ with respect to Hadamard multiplication (see \cite{Hca}). Each {\it basis matrix} $N_i$ of a coherent algebra $\C=\langle N_0, N_1,\ldots, N_r\rangle$ can be regarded as the adjacency matrix $A=A(\G_i)$ of a graph $\G_i=(X,R_i)$. Then $\G_i$ and $R_i$ are called a {\it basis graph} and a {\it basis relation}, respectively, of the coherent algebra $\C$. The basis relations of a coherent algebra give rise to a {\it coherent configuration} in the sense of \cite{Hcc}.

A special subfamily of coherent configurations are commutative association schemes also known as homogeneous coherent configurations \cite{EP}. Let $\R=\{R_0,R_1,\ldots,R_n\}$ denote a set of nonempty subsets of $X\times X$. For each $i$, let $A_i\in\Mat_X(\CC)$ denote the adjacency matrix of the (in general, directed) graph $(X,R_i)$ . The pair $(X,\R)$ is an {\it association scheme} with $n$ classes if
\begin{enumerate}
[label=\rm(AS\arabic*)]
\item
$A_0=I$, the identity matrix.
\item
$\ds{\sum_{i=0}^n A_i=J}$, the all-ones matrix.
\item
${A_i}^\top\in\{A_0,A_1,\ldots,A_n\}$ for $0\le i\le n$.
\item
$A_iA_j$ is a linear combination of $A_0,A_1,\ldots,A_n$ for $0\le i,j\le n$.
\end{enumerate}
By (AS1) and (AS4) the vector space $\M$ spanned by the set $\{A_0,A_1,\ldots,A_n\}$ is an algebra; this is the {\it Bose-Mesner algebra} of $(X,\R)$. We say that $(X,\R)$ is {\it commutative} if $\M$ is commutative, and that $(X,\R)$ is {\it symmetric} if the  matrices $A_i$ are symmetric. A symmetric association scheme is commutative. The concept of (symmetric) association schemes can also be viewed as a purely combinatorial generalization of the concept of finite transitive permutation groups (famously said as a ``group theory without groups''\cite{BI}). The Bose-Mesner algebra was introduced in \cite{BM}, and the monumental thesis of  Delsarte \cite{Daa} proclaimed the importance of commutative association schemes as a unifying framework for coding theory and design theory. There are a number of excellent articles and textbooks on the theory of (commutative) association schemes and Delsarte's theory; see, for instance,  \cite{BRA, BCN, DL, FT, HS, MT}. The following are some of the books which include accounts on commutative association schemes: \cite{CL, Gac, MWS, LW}. As an example of commutative association scheme, let $\G$ denote a distance regular graph of diameter $D$. It is well known (and not hard to prove it) that the vector space spanned by distance-$i$ matrices $\{A_0,A_1,\ldots,A_D\}$ of $\G$, is closed under both ordinary multiplication $(A,B)\mapsto AB$ and Hadamard multiplication $(A,B)\mapsto A\circ B$ (see, for example, \cite[Chapter III]{BI} or \cite[Chapter 4]{BCN}). This is one of the main reasons why the theory of distance-regular graphs is so rich in the study of algebraic and combinatorial structures.

In this paper we consider the following problem (we always assume that our graphs are finite, simple, and connected; see Section \ref{2a} for formal definitions).

\begin{problem}
{\label{1b}}
Let $\G$ denote a regular graph with vertex set $X$. Using the algebraic or combinatorial structure of $\G$, is it possible to find a set $\{F_0,F_1,\ldots,F_d\}\subset\Mat_{X}(\CC)$ $($for some $d\in\NN$\/$)$ such that the following hold?
\begin{enumerate}
[label=\rm(\roman*)]
\item
$F_i$'s $(0\le i\le d)$ are nonzero $(0,1)$-matrices, such that $F_i\circ F_j=\delta_{ij}F_i$ $(0\le i,j\le d)$.
\item
There exist $\Omega\subset\{0,1,\ldots,d\}$ such that
$\sum_{\alpha\in\Omega} F_{\alpha}=I$, the identity matrix.
\item
$\ds{\sum_{i=0}^d F_i=J}$, the all-ones matrix.
\item
For each $i$ $(0\le i\le d)$, ${F_i}^\top\in\{F_0,F_1,\ldots,F_d\}$.
\item
The vector space $\span\{F_0,F_1,\ldots,F_d\}$ is closed under both ordinary multiplication $(A,B)\rightarrow AB$ and Hadamard multiplication $(A,B)\rightarrow A\circ B$.
\item
$\span\{F_0,F=F_1,\ldots,F_d\}=\{p(F)\,|\,p\in\RR[t]\}$.
\end{enumerate}
\end{problem}
\medskip

A basis $\{F_0,F_1,\ldots,F_d\}$ of some subalgebra $\C$ of the matrix algebra $\Mat_{X}(\CC)$ for which properties (i)--(v) of Problem \ref{1b} hold, is known as the {\it standard basis} of $\C$. As consequence of property (v) we have that $F_iF_j$ is a linear combination of $F_0,F_1,\ldots,F_d$ for $0\le i,j\le d$, that is, there exist {\it intersection numbers} $p^h_{ij}$ $(0\le i,j,h\le d)$ such that $\ds{F_iF_j=\sum_{i=0}^d p^h_{ij} F_h }$.

Our main results are the Theorems \ref{1c}, \ref{1g}, \ref{1f}, \ref{1d}, \ref{1h}, \ref{1e} and Algorithms \ref{3d}, \ref{3d2}, and \ref{gi}.

\begin{theorem}
\label{1c}
Let $\G$ denote a regular graph with $d+1$ distinct eigenvalues. If the vector space $\A=\span\{I,A,\ldots,A^d\}$ is closed under Hadamard multiplication $(A,B)\rightarrow A\circ B$, then there exists a unique basis $\{F_0,F_1,\ldots,F_d\}$ of $\A$ such that the following hold.
\begin{enumerate}[label=\rm(\roman*)]
\item
$F_i$'s $(0\le i\le d)$ are nonzero $(0,1)$-matrices, such that $F_i\circ F_j=\delta_{ij}F_i$ $(0\le i,j\le d)$.
\item
There exist $m\in\{0,1,\ldots,d\}$ such that $F_m=I$, the identity matrix.
\item
$\ds{\sum_{i=0}^d F_i=J}$, the all-ones matrix.
\item
${F_i}^\top=F_i$ 
$(0\le i\le d)$.
\item
$F_iF_j$ is a linear combination of $F_0,F_1,\ldots,F_d$ for $0\le i,j\le d$.
\end{enumerate}
\end{theorem}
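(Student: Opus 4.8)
The plan is to recognize $(\A,\circ)$ as a commutative algebra that is semisimple for the Hadamard product, and to take the desired $F_i$ to be its primitive (Hadamard) idempotents. First I would record two facts about $\A$ that will do all the work. Since $A$ has $d+1$ distinct eigenvalues, its minimal polynomial has degree $d+1$, so $\{I,A,\dots,A^d\}$ is linearly independent and $\dim\A=d+1$. Because $\G$ is connected and regular, the all-ones vector spans the eigenspace of the largest eigenvalue, which is simple by Perron--Frobenius; hence the corresponding spectral idempotent equals $\tfrac{1}{|X|}J$ and is a polynomial in $A$. Therefore $J\in\A$, and $J$ is precisely the identity element of $\A$ under the Hadamard product.

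Next I would extract the idempotent basis. Identifying $\Mat_X(\CC)$ with $\CC^{X\times X}$ equipped with coordinatewise (Hadamard) multiplication, the ambient algebra is reduced, i.e.\ has no nonzero nilpotents. By hypothesis $(\A,\circ)$ is a subalgebra, and it contains the Hadamard identity $J$; being a subalgebra of a reduced algebra it is itself reduced, and a finite-dimensional reduced commutative $\CC$-algebra is semisimple, so $(\A,\circ)\cong\CC^{d+1}$. Let $F_0,\dots,F_d$ be its primitive idempotents. Idempotents of $\CC^{X\times X}$ are exactly the $(0,1)$-matrices, so each $F_i$ is a nonzero $(0,1)$-matrix with $F_i\circ F_j=\delta_{ij}F_i$; they form a basis, and the decomposition of the identity gives $\sum_{i=0}^d F_i=J$. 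This yields (i) and (iii), while uniqueness is automatic since the primitive idempotents of a semisimple commutative algebra are uniquely determined.

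It remains to verify (ii), (iv), (v). For (iv), every element of $\A$ is a polynomial in the symmetric matrix $A$ and is therefore symmetric, so $F_i^\top=F_i$. For (v), $\A$ is closed under ordinary matrix multiplication and $\{F_0,\dots,F_d\}$ is a basis of $\A$, so $F_iF_j\in\A$ is a linear combination of the $F_h$. For (ii), note $I\in\A$ is a Hadamard idempotent, hence $I=\sum_{i\in S}F_i$ for some $S\subseteq\{0,\dots,d\}$; each $F_i$ with $i\in S$ is then a diagonal $(0,1)$-matrix lying in $\A$. Here I would use connectivity once more: any diagonal matrix $D\in\A$ commutes with $A$, so $A_{xy}(D_{xx}-D_{yy})=0$ for all $x,y$, forcing $D_{xx}=D_{yy}$ whenever $x$ and $y$ are adjacent; connectivity then makes all diagonal entries equal, so $D$ is a scalar multiple of $I$. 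A nonzero diagonal $(0,1)$-matrix of this form must equal $I$, whence $|S|=1$ and some $F_m=I$.

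I expect the two steps where the graph hypotheses are genuinely needed to be the crux: establishing $J\in\A$ (connectivity plus regularity, via Perron--Frobenius), and the primitivity of $I$ (the diagonal-matrix argument, again via connectivity). The passage from ``closed under $\circ$'' to ``has a basis of $(0,1)$ idempotents'' is, by contrast, a soft consequence of the semisimplicity of reduced commutative algebras, and the symmetry and multiplicative-closure statements are immediate from $\A$ being the algebra of polynomials in the symmetric matrix $A$.
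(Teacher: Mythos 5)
Your proposal is correct, and it reaches the conclusion by a genuinely different route than the paper. The paper's proof is constructive: it flattens $I,A,\ldots,A^d$ into the rows of a $(d+1)\times|X|^2$ matrix, computes the reduced row echelon form, reassembles the resulting rows into matrices $F_0,\ldots,F_d$, and then forces idempotency and disjointness by evaluating putative linear combinations at the pivot positions; properties (ii)--(v) are deduced afterwards, with (ii) obtained from the commutation $F_\alpha J=JF_\alpha$ (using $J\in\A$) rather than from your commutation with $A$ plus connectivity (both arguments are sound). Your argument instead runs through structure theory: having shown $J\in\A$ via Perron--Frobenius (in effect the paper's Corollary \ref{2h}), you observe that $(\A,\circ)$ is a unital, finite-dimensional, commutative, reduced subalgebra of $(\Mat_X(\CC),\circ)$, hence semisimple and isomorphic to $\CC^{d+1}$, and you take the $F_i$ to be its primitive idempotents. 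This is essentially the proof of \cite[Theorem 2.6.1]{BCN}, which the paper explicitly cites and against which it positions its own technique as an alternative. As for what each approach buys: yours is shorter, more conceptual, and handles the uniqueness claim cleanly (primitive idempotents are canonical; the paper's proof in fact never addresses uniqueness at all) --- though you should add the one-line observation that any basis satisfying (i) consists of pairwise disjoint nonzero $\circ$-idempotents and hence, by counting against the $d+1$ primitive idempotents, must coincide with them. The paper's RREF technique, by contrast, is deliberately computational: the same construction is what powers Algorithm \ref{3d} in Subsection \ref{3c} (counting distinct eigenvalues without computing them) and the standard-basis computations used throughout the rest of the paper, which is why the authors prove the theorem that way.
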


Note the similarity between \cite[Theorem 2.6.1]{BCN} and our Theorem \ref{1c}. As a consequence of Theorem \ref{1c}, we see that if the adjacency algebra $\A$ of $\G$ is closed under Hadamard multiplication then it produces a symmetric association scheme. The property (ii) of Theorem \ref{1c} tell us that if we want to get property (ii) of Problem \ref{1b}, for $|\Omega|>1$, we should consider a directed graph $\G$. By Theorem \ref{1c}(iv), we also need a directed graph to get non-symmetric $F_i$'s. Using the technique from the proof of Theorem \ref{1c}, in Subsection \ref{3c}, we give an algorithm which yields the number of distinct eigenvalues of $A$ without computing them.

The next question we want to answer is what is the combinatorial structure of $\G$ for which the vector space $\A=\span\{I,A,\ldots,A^d\}$ is closed under Hadamard multiplication.

\begin{theorem}
\label{1g}
Let $\G$ denote a regular graph with $d+1$ distinct eigenvalues. If the vector space $\A=\span\{I,A,\ldots,A^d\}$ is closed under Hadamard multiplication, then, for every vertex $x$, there exists an $x$-distance-faithful intersection diagram with $d+1$ cells. Moreover, this intersection diagram is the same around every vertex.
\end{theorem}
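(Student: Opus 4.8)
The plan is to read the diagram directly off the standard basis produced by Theorem~\ref{1c}. By that theorem $\A$ has a basis $\{F_0,F_1,\dots,F_d\}$ of symmetric $(0,1)$-matrices that are Hadamard-orthogonal and sum to $J$, one of them being $I$; equivalently $\A$ is the Bose--Mesner algebra of a symmetric association scheme with relations $R_0,\dots,R_d$, where $(x,y)\in R_i$ iff $(F_i)_{xy}=1$. Fixing a vertex $x$, I set $C_i(x)=\{y\in X:(F_i)_{xy}=1\}$. Since the $F_i$ are Hadamard-orthogonal $(0,1)$-matrices with $\sum_i F_i=J$, the sets $C_0(x),\dots,C_d(x)$ are pairwise disjoint and cover $X$. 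They are all nonempty: as $\A$ is generated by $A$ it is commutative and contains $J$, so $F_iJ=JF_i$; the $(x,y)$-entry of $F_iJ$ is the $x$-th row sum of $F_i$ and is independent of $y$, while that of $JF_i$ is the $y$-th column sum and is independent of $x$, so every row sum of $F_i$ equals a common constant $k_i$, and $F_i\neq 0$ forces $|C_i(x)|=k_i\ge 1$. Hence the cells partition $X$ into exactly $d+1$ nonempty parts.

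First I would establish distance-faithfulness, i.e.\ that each cell lies in a single distance-sphere about $x$. The key point is that every power of $A$ is a \emph{nonnegative} combination of the $F_h$. Indeed $A$ is a $(0,1)$-matrix in $\A$, so on each support $C_\ell$ the entry of $A$ is a constant in $\{0,1\}$ and thus $A=\sum_{\ell\in S}F_\ell$ for some $S\subseteq\{0,\dots,d\}$; since the intersection numbers $p^h_{ij}$ are nonnegative integers, induction on $k$ yields $A^k=\sum_h a^{(k)}_h F_h$ with all $a^{(k)}_h\ge 0$. Because the $F_h$ have disjoint supports, $a^{(k)}_h=(A^k)_{xy}$ for every $(x,y)\in R_h$, so $a^{(k)}_h$ is exactly the number of walks of length $k$ between any pair in $R_h$. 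Consequently $\partial_h:=\min\{k:a^{(k)}_h>0\}$ is the length of a shortest such walk, that is, $\partial_h=\dist(x,y)$ for every $(x,y)\in R_h$. As $\partial_h$ is read off the global numbers $a^{(k)}_h$, it is independent of the representative pair; thus all vertices of $C_i(x)$ share the distance $\partial_i$ from $x$, and $\partial_i$ does not depend on $x$.

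Next I would verify that $\{C_0(x),\dots,C_d(x)\}$ is equitable, which is what turns it into an intersection diagram. For $y\in C_i(x)$ the neighbours of $y$ are the $z$ with $(y,z)\in R_\ell$ for some $\ell\in S$, and among these the number lying in $C_j(x)$ (those with $(x,z)\in R_j$) equals $\sum_{\ell\in S}p^{\,i}_{j\ell}$ by the definition of the intersection numbers together with the symmetry of the scheme. This count depends only on $i$ and $j$, not on the chosen $y$, so the partition is regular and the resulting cell-and-edge picture is a bona fide intersection diagram. Placing $C_h(x)$ in ``column $\partial_h$'' makes it distance-faithful: if $C_i(x)$ and $C_j(x)$ are joined then some pair of adjacent vertices realizes the join, whence $|\partial_i-\partial_j|\le 1$ by the triangle inequality, so edges run only within a column or between consecutive columns.

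Finally, every datum of the diagram---the number of cells $d+1$, the cell sizes $k_h$, the distances $\partial_h$, and the edge-multiplicities $\sum_{\ell\in S}p^{\,i}_{j\ell}$---is expressed solely through the scheme parameters $k_h$, $\partial_h$, $p^h_{ij}$ and the set $S$, none of which refers to the base vertex. Hence the diagram built at any other vertex $x'$ carries identical data, which proves that it is the same around every vertex. I expect the main obstacle to lie in the second paragraph: making rigorous that $\partial_h=\min\{k:a^{(k)}_h>0\}$ is well defined and coincides with graph distance on $R_h$, since this is precisely where the algebraic closure hypothesis is converted into the combinatorial distance-faithful property.
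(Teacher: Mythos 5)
Your proposal is correct and follows essentially the same route as the paper: both read the partition off the standard basis $\{F_0,\dots,F_d\}$ from Theorem \ref{1c}, prove distance-faithfulness by observing that all walk counts $(A^k)_{xy}$ are constant on each relation (so shortest-walk length, i.e.\ distance, is too), establish equitability via the structure constants of the algebra (your $\sum_{\ell\in S}p^{\,i}_{j\ell}$ is exactly the paper's $c_{ij}$ from $AF_i=\sum_h c_{ih}F_h$), and conclude vertex-independence because all these parameters are global. The only cosmetic differences are in sub-steps, e.g.\ you get constant cell sizes from $F_iJ=JF_i$ whereas the paper uses $F_i=\sum_h\beta_hE_h$ and $E_0\jj=\jj$.
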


For the converse of Theorem \ref{1g}, see Theorem \ref{1f}. The first author in \cite{FQ} defined quotient-polynomial graphs, as graphs for which the adjacency matrices of walk-regular partition belong to adjacency algebra $\A$. In the same paper some combinatorial properties of these graphs were studied. In Section \ref{go} we recall some old, and prove some new, properties of quotient-polynomial graphs. We also consider graphs which have the same distance-faithful intersection diagram around every vertex, and we propose a method for deciding is their distance-$i$ matrix $A_i$ is polynomial in $A$.

\begin{theorem}
\label{1f}
Let $\G$ denote a graph with vertex set $X$, $x$-distance-faithful intersection diagram $\pi_x$, and assume that $\pi_x$ has $r+1$ cells $\P_i$ with $\P_0=\{x\}$: $\pi_x=\{\P_0,\P_1,\ldots,\P_r\}$. Let $w_{ij}$ denote the number of $i$-walks $(0\le i\le r)$ from $y$ to $x$ for any $y\in\P_j$ $(0\le j\le r)$. Let $P=[w_{ij}]_{0\le i,j\le r}$ denote $(r+1)\times(r+1)$ matrix with entries $w_{ij}$. If $\G$ has the same $x$-distance-faithful intersection diagram around every $x\in X$ then $\G$ has exactly $\rank(P)$ distinct eigenvalues. Moreover, if $\rank(P)=r+1$ then $\G$ is a quotient-polynomial graph.
\end{theorem}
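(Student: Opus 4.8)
The plan is to analyze, for a fixed base vertex $x$, the \emph{local walk module} $U_x=\span\{A^ie_x : i\ge 0\}$, where $e_x$ denotes the characteristic vector of $x$, and to prove that $\dim U_x$ coincides both with $\rank(P)$ and with the number of distinct eigenvalues of $\G$. The first coincidence is pure linear algebra resting on the defining property of the diagram. Since $A$ is symmetric, $(A^ie_x)_y=(A^i)_{yx}$ is the number of $i$-walks from $y$ to $x$; by distance-faithfulness this count is constant on each cell $\P_j$ with common value $w_{ij}$, so every $A^ie_x$ lies in the $(r+1)$-dimensional space $W$ of vectors that are constant on the cells of $\pi_x$. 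Let $\phi\colon W\to\RR^{r+1}$ be the coordinate isomorphism sending a cell-constant vector to its tuple of cell-values; then $\phi(A^ie_x)=(w_{i0},\ldots,w_{ir})$ is exactly the $i$-th row of $P$ for $0\le i\le r$. To conclude $\rank(P)=\dim U_x$, put $s=\dim U_x\le r+1$ (the bound holds because $U_x\subseteq W$). The vectors $e_x,Ae_x,\ldots,A^{s-1}e_x$ are linearly independent and already span $U_x$ (once $A^se_x$ becomes dependent, multiplying the relation by $A$ shows every higher power is dependent too); as $s-1\le r$, the first $s$ rows of $P$ are independent, so $\rank(P)\ge s$, while $\rank(P)\le\dim\phi(U_x)=s$, giving equality.

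Next I would identify $\dim U_x$ with the number $d+1$ of distinct eigenvalues. Here the hypothesis that the $x$-distance-faithful diagram is the same around every vertex is used to obtain walk-regularity: the number of closed $i$-walks at $x$ equals $(A^i)_{xx}=w_{i0}$ (take $y=x\in\P_0$), which is vertex-independent. For a walk-regular graph each spectral idempotent $E_\theta$ has constant diagonal $(E_\theta)_{xx}=m_\theta/n>0$, where $m_\theta$ is the multiplicity of $\theta$ and $n=|X|$. Since $E_\theta$ is a symmetric idempotent, $\|E_\theta e_x\|^2=e_x^\top E_\theta e_x=(E_\theta)_{xx}>0$, so $E_\theta e_x\neq\0$ for every eigenvalue $\theta$. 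Thus $e_x$ has a nonzero component in each of the $d+1$ eigenspaces, whence its minimal annihilator polynomial has degree $d+1$, i.e.\ $\dim U_x=d+1$. Combined with the previous paragraph, $\rank(P)=d+1$, which is the first assertion.

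For the ``moreover'' part, suppose $\rank(P)=r+1$. Comparing with $\rank(P)=d+1$ forces $r=d$, and $P$ is then a square invertible matrix, so $U_x=W$ and in particular each cell-indicator $u_j$ (the characteristic vector of $\P_j$) lies in $U_x$. Inverting $P$ yields $u_j=\sum_{i=0}^{r}(P^{-1})_{ji}A^ie_x=Q_je_x$, where $Q_j=\sum_{i=0}^{r}(P^{-1})_{ji}A^i\in\A$. Because the diagram, and hence $P$ and its inverse, is the same around every vertex, the \emph{same} polynomial $Q_j$ reproduces the $j$-th cell-indicator at every base vertex $z$: writing $A_j$ for the adjacency matrix of the relation ``$y$ lies in cell $j$ of the diagram centred at $z$'', one gets $A_je_z=Q_je_z$ for all $z$, and therefore $A_j=Q_j\in\A$ for every $j$. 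This is precisely the statement that $\G$ is quotient-polynomial.

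I expect the main obstacle to be twofold. First, one must guarantee that $A^ie_x\in W$ for \emph{all} powers $i$, not merely for $i\le r$; this is exactly the content of the faithfulness of the diagram, and without it the finite matrix $P$ could undercount the local spectrum and the identity $\rank(P)=\dim U_x$ would fail. Second, and most delicate, is promoting the single-vertex identity $u_j=Q_je_x$ to the global matrix identity $A_j=Q_j$, where the assumption that the diagram is identical around every vertex is indispensable, since it makes the coefficients $(P^{-1})_{ji}$ vertex-independent. A minor technical point to settle is the symmetry of the relations, so that $A_je_z$ is genuinely the indicator of cell $j$ around $z$; this follows from distance-faithfulness, and in any case $\A$ consists of symmetric matrices, so one may equivalently argue with $A_j^\top$ and transpose the conclusion.
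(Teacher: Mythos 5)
Your proposal is correct, and it takes a genuinely different route from the paper's. The paper argues globally: from the hypothesis it builds an $(r+1)$-class partition of $X\times X$ with adjacency matrices $M_j$, writes $A^i=\sum_j w_{ij}M_j$ (its identity \eqref{gw}), and then invokes Theorem \ref{gf} both to obtain $d\le r$ and $\rank(P)=\dim\A=d+1$, and, when $\rank(P)=r+1$, to conclude quotient-polynomiality from the equivalence (i)$\Leftrightarrow$(v) of that theorem. You instead argue locally at a base vertex: $\rank(P)=\dim U_x$ by a Krylov-space argument inside the space $W$ of cell-constant vectors, then $\dim U_x=d+1$ because walk-regularity forces $E_\theta e_x\neq 0$ for every eigenvalue $\theta$; and for the ``moreover'' part you invert $P$ to get explicit polynomials $Q_j\in\A$ with $A_je_z=Q_je_z$ for every base vertex $z$, hence $A_j=Q_j\in\A$. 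Your route is self-contained (no appeal to Theorem \ref{gf}) and is in fact more robust for the first assertion: the paper's identity \eqref{gw} tacitly assumes the columns $\ww_0,\ldots,\ww_r$ are pairwise distinct (the paper itself concedes this need not hold; if $\ww_{j_1}=\ww_{j_2}$ then $M_{j_1}=M_{j_2}$ and \eqref{gw} double-counts --- e.g.\ $K_4$ with the identical equitable partitions $\{x\},\{a\},\{b,c\}$ around every vertex), whereas your local argument never meets this issue. What the paper's route buys is reusability: the work is concentrated in Theorem \ref{gf}, which also serves Corollary \ref{gs} and Algorithm \ref{gi}.

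Two points in your write-up need tightening. First, walk-regularity: as justified (``$(A^i)_{xx}=w_{i0}$ is vertex-independent''), you only get vertex-independence for $0\le i\le r$, while the idempotent argument needs it for $0\le i\le d$, and at that stage you do not yet know $d\le r$. The fix is already in your setup: $W$ is $A$-invariant because the partition is equitable, and the cell-value vector of $A^ie_x$ is $B^i$ applied to the $0$-th coordinate vector, $B$ being the quotient matrix; hence $(A^i)_{xx}=(B^i)_{00}$ for \emph{all} $i\ge 0$, and $B$ is the same for every vertex (this is Remark \ref{gm} of the paper). Relatedly, the constancy of walk counts on cells --- and the inclusion $A^ie_x\in W$ for all $i$ --- comes from equitability together with $\P_0=\{x\}$, not from distance-faithfulness, which you invoke for it twice. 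Second, quotient-polynomiality is defined (Definition \ref{gk}) via the walk-regular partition of Definition \ref{ga}, not via your diagram relations $A_j$, and the two partitions could a priori differ if two cells had equal walk vectors. Under $\rank(P)=r+1$ the columns of $P$ are linearly independent, hence pairwise distinct, so the two partitions coincide; alternatively, each walk-regular class is a union of diagram classes, so its matrix is a sum of $A_j$'s and lies in $\A$ anyway. Finally, the ``symmetry of the relations'' worry is vacuous: $A_je_z$ is the indicator of $\P_j(z)$ directly from the definition $(A_j)_{yz}=1$ iff $y\in\P_j(z)$, and symmetry of $A_j$ falls out at the end because $Q_j$ is a polynomial in $A$.
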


\noindent
For the moment assume that $\G$ is a distance-regular graph with diameter $D$. Note that intersection diagram of a distance partition around $x$ of $\G$ has $D+1$ cells, and is the same for every $x\in X$ (also it is $x$-distance-faithful). So as an immediate corollary of Theorem \ref{1f}, the number of distinct eigenvalues of a distance-regular graph $\G$ is $\le D+1$. Also note that the nonnegative integer $w_{ij}$ from the Theorem \ref{1f} can be computed from the $x$-distance-faithful intersection diagram.

In Theorem \ref{1d} we establish a connection between the structure of $\G$ and Problem \ref{1b}.

\begin{theorem}
\label{1d}
Let $\G$ denote a regular graph with $d+1$ distinct eigenvalues. Then, the vector space $\A=\span\{I,A,\ldots,A^d\}$ is closed under Hadamard multiplication if and only if $\G$ is a quotient-polynomial graph.
\end{theorem}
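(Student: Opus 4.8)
The plan is to compare $\A$ with the \emph{walk-regular partition} of $X\times X$, declaring $(x,y)$ and $(x',y')$ equivalent when $(A^\ell)_{xy}=(A^\ell)_{x'y'}$ for every $\ell\ge 0$ (equivalently for $0\le\ell\le d$, since higher powers are linear combinations of $I,A,\dots,A^d$). Let $R_0,\dots,R_m$ be the resulting classes and $C_0,\dots,C_m\in\Mat_X(\CC)$ their $(0,1)$ relation matrices; these are precisely the adjacency matrices of the walk-regular partition, so $\G$ is quotient-polynomial exactly when $C_k\in\A$ for all $k$. The $C_k$ are nonzero, have pairwise disjoint supports, and satisfy $\sum_k C_k=J$, hence are linearly independent, so $\span\{C_0,\dots,C_m\}$ has dimension $m+1$. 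The first observation, needing no extra hypothesis, is that each power $A^\ell$ is constant on every class $R_k$ and so is a linear combination of the $C_k$; consequently $\A\subseteq\span\{C_0,\dots,C_m\}$, and in particular $d+1=\dim\A\le m+1$. The entire proof reduces to showing that Hadamard-closedness of $\A$ is equivalent to this inclusion being an equality, i.e.\ to $m=d$.

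\emph{Sufficiency} ($\G$ quotient-polynomial $\Rightarrow$ $\A$ Hadamard-closed). If every $C_k$ lies in $\A$, then $\span\{C_0,\dots,C_m\}\subseteq\A$, which with the inclusion above gives $\A=\span\{C_0,\dots,C_m\}$. This space is closed under Hadamard multiplication because $C_j\circ C_k=\delta_{jk}C_k$ forces
\[
\Bigl(\sum_j a_j C_j\Bigr)\circ\Bigl(\sum_k b_k C_k\Bigr)=\sum_k a_k b_k\,C_k\in\span\{C_0,\dots,C_m\}.
\]
Hence $\A$ is closed under Hadamard multiplication.

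\emph{Necessity} ($\A$ Hadamard-closed $\Rightarrow$ $\G$ quotient-polynomial). Here I would invoke Theorem \ref{1c}: since $\G$ is regular with $d+1$ distinct eigenvalues and $\A$ is Hadamard-closed, $\A$ has a standard basis $\{F_0,\dots,F_d\}$ of nonzero $(0,1)$-matrices with $F_i\circ F_j=\delta_{ij}F_i$ and $\sum_i F_i=J$, so these partition $X\times X$ into exactly $d+1$ classes. Every element of $\A$ is a linear combination of the $F_i$ and is therefore constant on each such class; applied to the powers $A^\ell\in\A$, this shows the walk-count profile $(x,y)\mapsto((A^\ell)_{xy})_{\ell\ge0}$ is constant on each $F$-class, i.e.\ the $F$-partition refines the walk-regular partition $\{R_0,\dots,R_m\}$. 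Thus $m+1\le d+1$, which together with the reverse inequality of the first paragraph yields $m=d$. Since a refinement of partitions into equally many nonempty blocks is an equality, the families $\{F_i\}$ and $\{C_k\}$ coincide up to reordering; in particular each $C_k$ equals some $F_i\in\A$, so $\G$ is quotient-polynomial.

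The only step demanding genuine care is necessity, and specifically the claim that the standard basis supplied by Theorem \ref{1c} \emph{is} the walk-regular partition rather than a strict refinement of it. The dimension bookkeeping is what closes this gap: the always-valid inclusion $\A\subseteq\span\{C_k\}$ bounds the number of walk-classes below by $d+1$, while the $(0,1)$-standard basis bounds it above by $d+1$, pinning $m=d$ and forcing the two partitions to agree. Everything else is routine manipulation of $(0,1)$-matrices with disjoint supports.
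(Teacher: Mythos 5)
Your proof is correct and follows essentially the same route as the paper: sufficiency by noting that the relation matrices of the walk-regular partition are pairwise disjoint $(0,1)$-matrices lying in $\A$ and hence spanning it, and necessity by invoking Theorem \ref{1c} and identifying its standard basis with the walk-regular partition. Your explicit dimension bookkeeping ($m+1\le d+1$ from the refinement and $d+1\le m+1$ from $\A\subseteq\span\{C_0,\ldots,C_m\}$), which rules out the standard-basis partition being a strict refinement of the walk-regular one, carefully justifies a step the paper asserts only briefly.
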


As a corollary of Theorem \ref{1d}, if the number of distinct entries of $A^d$ is greater than $d+1$, then the adjacency algebra $\A$ is not closed under Hadamard multiplication (see also Section \ref{go}).

In Theorem \ref{1h} we consider quotient-polynomial graphs with diameter $2$, and $4$ distinct eigenvalues. Quotient-polynomial graphs with diameter $2$, and $3$ distinct eigenvalues are known as strongly-regular graphs.

\begin{theorem}
\label{1h}
Let $\G$ denote a regular connected graph with diameter $2$ and $4$ distinct eigenvalues. Then the vector space $\A=\span\{I,A,A^2,A^3\}$ is closed under Hadamard multiplication if and only if either {\rm (i)} or {\rm (ii)} bellow hold.
\begin{itemize}
\item[{\rm (i)}]
Any two nonadjacent vertices have a constant number of common neighbours, and the number of common neighbours of any two adjacent vertices takes precisely two values.
\item[{\rm (ii)}]
Any two adjacent vertices have a constant number of common neighbours, and the number of common neighbours of any two nonadjacent vertices takes precisely two values.
\end{itemize}
\end{theorem}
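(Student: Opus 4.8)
The plan is to leverage Theorems~\ref{1c} and \ref{1d}, which already tell us that $\A=\span\{I,A,A^2,A^3\}$ is Hadamard-closed if and only if $\A$ carries a standard basis $\{F_0=I,F_1,F_2,F_3\}$ of symmetric $(0,1)$-matrices with pairwise disjoint supports and $\sum_{i=0}^3 F_i=J$. The decisive structural feature of diameter $2$ is that the only off-diagonal distance relations are adjacency and non-adjacency, i.e.\ $A$ and $A_2:=J-I-A$ are both $(0,1)$-matrices whose supports partition the off-diagonal positions. Throughout I will use two facts about a connected regular graph with exactly $4$ distinct eigenvalues: $\dim\A=4$, and $J\in\A$ (the Hoffman polynomial expresses $J$ as a cubic in $A$). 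Writing $A^2=kI+\lambda_1F_1+\lambda_2F_2+\lambda_3F_3$ once the basis is in hand, the number $\lambda_i$ is exactly the number of common neighbours of any pair of vertices lying in relation $F_i$.

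For the forward direction, assume $\A$ is Hadamard-closed and take the standard basis above. Since $A\in\A$ is a $(0,1)$-matrix and the $F_i$ are the Hadamard-primitive idempotents, $A$ is a sum of some $F_i$'s; as $A$ has zero diagonal, $A=\sum_{i\in S}F_i$ for some $S\subseteq\{1,2,3\}$. The case $|S|=3$ would give $A=J-I$, forcing $A_2=0$ and contradicting diameter $2$, so $|S|\in\{1,2\}$. If $|S|=1$, say $A=F_1$, then adjacency is a single relation (constant common-neighbour number $\lambda_1$) while $A_2=F_2+F_3$ splits non-adjacency into two relations; if $|S|=2$, say $A=F_1+F_2$, the roles are reversed and $A_2=F_3$. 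In each case it remains to check that the two common-neighbour values on the split relation are genuinely distinct. I will argue by contradiction: if they coincided, then $A^2$ would lie in $\span\{I,A,A_2\}$, say $A^2=\alpha I+\beta A+\gamma A_2$; using $A_2=J-I-A$ and restricting to $\mathbf 1^{\perp}$ (where $J$ vanishes, and which contains every non-principal eigenvector by connectivity and regularity) would force every non-principal eigenvalue $\theta$ to satisfy a fixed quadratic $\theta^2=(\alpha-\gamma)+(\beta-\gamma)\theta$, leaving at most two non-principal eigenvalues and hence at most $3$ distinct eigenvalues in all — a contradiction. Thus the split relation really carries two distinct values, which is precisely alternative (ii) when $|S|=1$ and alternative (i) when $|S|=2$.

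For the converse, suppose (say) alternative (ii) holds; the argument for (i) is symmetric with the roles of $A$ and $A_2$ interchanged. Let $\lambda$ be the constant number of common neighbours of adjacent vertices and let $\mu_1\ne\mu_2$ be the two values taken on non-adjacent pairs; define $F_2,F_3$ to be the $(0,1)$-matrices supported on the non-adjacent pairs with $\mu_1$, respectively $\mu_2$, common neighbours. Then $A^2=kI+\lambda A+\mu_1F_2+\mu_2F_3$ and $F_2+F_3=A_2=J-I-A$, so the pair of equations
\[
F_2+F_3=J-I-A,\qquad \mu_1F_2+\mu_2F_3=A^2-kI-\lambda A
\]
has an invertible coefficient matrix because $\mu_1\ne\mu_2$; solving expresses $F_2$ and $F_3$ as linear combinations of $I,A,A^2,J$, all of which lie in $\A$ (here $J\in\A$ is used). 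Hence $\{I,F_1:=A,F_2,F_3\}\subset\A$, and these four $(0,1)$-matrices have pairwise disjoint supports summing to $J$; being nonzero with disjoint supports they are linearly independent, so they form a basis of the $4$-dimensional $\A$. Finally, for such a basis $F_i\circ F_j=\delta_{ij}F_i$, whence for any $M=\sum a_iF_i$ and $N=\sum b_iF_i$ we get $M\circ N=\sum_i a_ib_iF_i\in\A$; therefore $\A$ is closed under Hadamard multiplication.

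The main obstacle is the forward direction's rigidity argument: translating ``the number of common neighbours takes precisely two values'' into a genuine statement about $\A$ requires ruling out the degenerate coincidence of the two values, and this is exactly where the hypothesis of $4$ (rather than $3$) distinct eigenvalues is spent, via the quadratic/eigenvalue count on $\mathbf 1^{\perp}$. Once that is in place, everything else is bookkeeping driven by the single identity $A_2=J-I-A$ available in diameter $2$ together with the membership $J\in\A$.
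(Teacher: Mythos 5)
Your proof is correct and follows essentially the same route as the paper: Theorem \ref{1c} gives the standard basis $\{I,F_1,F_2,F_3\}$, the zero diagonal of $A$ together with the diameter-$2$ hypothesis forces $A=F_1$ or $A=F_1+F_2$, and the converse builds the basis inside $\A$ by solving the $2\times 2$ linear system (invertible because the two values differ) and using $J\in\A$. The one place you go beyond the paper is worth keeping: the paper's forward direction ends each case with ``and the result follows'' without verifying that the two common-neighbour values on the split relation are genuinely distinct, whereas your eigenvalue-count argument on $\mathbf{1}^{\perp}$ (a coincidence of values would put $A^2$ in $\span\{I,A,J\}$, leaving at most three distinct eigenvalues) is exactly what is needed to justify the ``takes precisely two values'' clause of the statement.
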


\noindent
Note the similarity between \cite[Theorem 5.1]{ED} and Theorem \ref{1h}.

\medskip
To get property (vi) of Problem \ref{1b}, we have Theorem \ref{1e}.

\begin{theorem}
\label{1e}
Let $\G$ denote a quotient-polynomial graph with $d+1$ distinct eigenvalues, and let $\{I,F_1,\ldots,F_d\}$ denote the standard basis of the adjacency algebra $\A$. Pick $F\in\{F_0,F_1,\ldots,F_d\}$. Then $F$ has $d+1$ distinct eigenvalues if and only if $\span\{F_0,F_1,\ldots,F_d\}=\span\{I,F,\ldots,F^d\}$.
\end{theorem}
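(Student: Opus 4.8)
The plan is to reduce the statement to a single standard fact of linear algebra: for a diagonalizable matrix, the dimension of the unital algebra it generates equals its number of distinct eigenvalues. The only real work is to verify that, in the present situation, the subspace $\span\{I,F,\ldots,F^d\}$ is already the whole algebra generated by $F$, and then to compare dimensions inside $\A$.

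First I would record the structure of $\A$. Since $\A=\span\{I,A,\ldots,A^d\}$ is generated by the single symmetric matrix $A$, it is commutative; and because $A$ has $d+1$ distinct eigenvalues its minimal polynomial has degree $d+1$, so $\dim\A=d+1$. Let $E_0,\ldots,E_d$ denote the spectral idempotents of $A$, i.e.\ the orthogonal projectors onto its $d+1$ eigenspaces; these form a basis of $\A$. By Theorem \ref{1c} (equivalently, by the hypothesis that $\G$ is quotient-polynomial together with Theorem \ref{1d}), $\A$ carries the standard basis $\{I=F_0,F_1,\ldots,F_d\}$, so that $\span\{F_0,\ldots,F_d\}=\A$.

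Next I would analyse the chosen matrix $F$. As $F\in\A$, I can expand $F=\sum_{i=0}^d\mu_iE_i$ for scalars $\mu_0,\ldots,\mu_d$, which exhibits $F$ as diagonalizable with eigenvalue $\mu_i$ on $\im E_i$. Hence the distinct eigenvalues of $F$ are exactly the distinct values among $\mu_0,\ldots,\mu_d$; write $m$ for their number, so $m\le d+1$. Because $F$ is diagonalizable, its minimal polynomial is $\prod(t-\mu)$ over the $m$ distinct eigenvalues, a polynomial of degree $m$. Two consequences follow. On one hand, $m\le d+1$ forces $F^{d+1}$, and therefore every higher power of $F$, to lie in $\span\{I,F,\ldots,F^d\}$; thus $\span\{I,F,\ldots,F^d\}$ coincides with the full algebra $\{p(F)\mid p\in\RR[t]\}$ generated by $F$. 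On the other hand, the minimal polynomial having degree $m$ means precisely that $I,F,\ldots,F^{m-1}$ are linearly independent while $F^m$ depends on them, so this generated algebra has dimension $m$.

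Putting these together finishes the argument. The generated algebra $\span\{I,F,\ldots,F^d\}$ is contained in $\A$ (as $\A$ is an algebra containing $F$), and $\dim\A=d+1$, so the equality $\span\{F_0,\ldots,F_d\}=\span\{I,F,\ldots,F^d\}$ holds if and only if $\span\{I,F,\ldots,F^d\}$ has dimension $d+1$, i.e.\ if and only if $m=d+1$, i.e.\ if and only if $F$ has $d+1$ distinct eigenvalues; this yields both implications at once. I do not expect a serious obstacle here. The one point that requires care is the first consequence above, namely that $\span\{I,F,\ldots,F^d\}$ is already closed under multiplication and hence equals the entire algebra generated by $F$; this is exactly where the bound $m\le d+1$, which comes from $F$ living in the $(d+1)$-dimensional algebra $\A$, is used.
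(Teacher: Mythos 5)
Your proof is correct and takes essentially the same approach as the paper's: both reduce the theorem to the standard fact that a real symmetric (diagonalizable) matrix generates a polynomial algebra whose dimension equals its number of distinct eigenvalues, and then compare dimensions inside the $(d+1)$-dimensional algebra $\A$. Your explicit expansion $F=\sum_{i=0}^{d}\mu_i E_i$ and the unified dimension count merely spell out what the paper's two separate implications leave implicit.
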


Note that Theorems \ref{1c}, \ref{1d} and \ref{1e} give a solution of Problem \ref{1b}.

The paper is organized as follows: in Section \ref{2a} we recall some  notation and definitions.  In Section \ref{3a} we prove Theorem \ref{1c}, in Subsection \ref{3c} we give an algorithm which gives the number of different eigenvalues of a Hermitian matrix without computing them, and in Subsection \ref{3c2} we propose a simple algorithm to check distance-regularity.  In Section \ref{7a} we prove Theorem \ref{1g}. In Section \ref{go} we re-prove some old and obtain some new results about quotient-polynomial graphs, and we prove Theorem \ref{1f}. In Subsection \ref{gn} we give an algorithm which computes the polynomial $p_i(t)$ so that $A_i=p_i(A)$ (if such polynomial exists). In Section \ref{4a} we prove Theorems \ref{1d} and  Theorem \ref{1h}. In Section \ref{5a} we prove Theorem \ref{1e}. Finally, in the last Section \ref{6a} we propose some open problems.


\section{Definitions and preliminaries}
\label{2a}
A {\it graph} (or an {\it undirected graph}) $\G$ is a pair $(X, R)$, where $X$ is a nonempty set and $R$ is a collection of two element subsets of $X$. The elements of $X$ are called the {\it vertices} of $\G$, and the elements of $R$ are called the {\it edges} of $\G$. When $xy\in R$, we say that vertices $x$ and $y$ are {\it adjacent}, or that $x$ and $y$ are {\it neighbors}.
A graph is {\it finite} if both its vertex set and edge set are finite. If we allow for an edge to start and to end at the same vertex, then an edge with identical ends is called a {\it loop}, and a graph is {\it simple} if it has no loops and no two of its edges join the same pair of vertices.
For any two vertices $x, y \in X$, a {\it walk} of length $h$ from $x$ to $y$ is a sequence $x_0,x_1,x_2,\ldots,x_h$ $(x_i\in X,\, 0\le i\le h)$ such that $x_0 = x$, $x_h = y$, and $x_i$ is adjacent to $x_{i+1}$ $(0\le i\le h-1)$. We say that $\G$ is {\it connected} if for any $x, y\in X$, there is a walk from $x$ to $y$. From now on, we assume that $\G$ is finite, simple and connected.

For any $x, y\in X$, the {\it distance} between $x$ and $y$, denoted $\dist(x, y)$, is the length of the shortest walk from $x$ to $y$. The {\it diameter} $D = D(\G)$ is defined to be
$$
D = \max\{\dist(u,v)\,|\,u, v\in X\}.
$$

Let $\G = (X, R)$ be a graph with diameter $D$. For a vertex $x\in X$ and any non-negative integer $h$ not exceeding $D$, let $\G_h(x)$ denote the subset of vertices in $X$ that are at distance $h$ from $x$. Let $\G(x)=\G_1(x)$ and $\G_{-1}(x) = \G_{D+1}(x) := \emptyset$. For any two vertices $x$ and $y$ in $X$ at distance $h$, let
\begin{eqnarray*}
c_h(x,y) & := & \G_{h-1}(x)\cap\G(y),\\
a_h(x,y) & := & \G_{h}(x)\cap\G(y),\\
b_h(x,y) & := & \G_{h+1}(x)\cap\G(y).
\end{eqnarray*}
\noindent
We say $\G$ is {\it regular with valency} $k$, or {$k$-regular}, if each vertex in $\G$ has exactly $k$ neighbours. A graph $\G$ is called {\it distance-regular} if there are integers $b_i$, $c_i$ $(0\le i\le D)$ which satisfy $c_i = |c_i(x, y)|$ and $b_i = |b_i (x, y)|$ for any two vertices $x$ and $y$ in $X$ at distance $i$. Clearly such a graph is regular of valency $k := b_0$, $b_D=c_0=0$, $c_1=1$ and
$$
a_i := |a_i(x,y)| = k - b_i - c_i\quad (0\le i\le D)
$$
\noindent
is the number of neighbours of $y$ in $\G_i(x)$ for $x, y\in X$ $(\dist(x,y)=i)$. For more information about distance-regular graphs, we refer a reader to \cite{DKT}. Some excellent articles that contains algebraic approach to the theory of distance-regular graphs are \cite{ADF, ADF2, MF, FGG, AN, T2}.
An $(n,k,\lambda,\mu)$ {\it strongly-regular graph} is a distance-regular graph $\G=(X,R)$ of diameter $2$ with $|X|=n$, $b_0=k$, $a_1=\lambda$ and $c_2=\mu$.


A {\it partition around} $x$ of $\G$, is a partition $\{\P_0=\{x\},\P_1,\ldots,\P_s\}$ of the vertex set $X$, where $s$ is a positive integer.
The {\it eccentricity} of $x$, denoted by $\varepsilon(x)$, is the maximum distance between $x$ and any other vertex $y$ of $\G$.
A {\it distance partition around} $x$, is a partition $\{\G_0(x),\G(x),\ldots,\G_{\varepsilon(x)}(x)\}$ of $X$.
A {\it $x$-distance-faithful partition} $\{\P_0,\P_1,\ldots,\P_s\}$ with $s\ge\varepsilon$ is a refinement of the distance partition around $x$.
An {\it equitable partition} of a graph $\G$ is a partition $\pi = \{\P_1, \P_2, \dots, \P_s\}$ of its vertex set into nonempty cells such that for all integers $i,j$ $(1 \le i,j \le s)$ the number $c_{ij}$ of neighbours, which a vertex in the cell $\P_i$ has in the cell $\P_j$, is independent of the choice of the vertex in $\P_i$. We call the $c_{ij}$'s the {\it corresponding parameters}. The {\it intersection diagram} of a equitable partition $\pi$ of a graph $\G$ is the collection of circles indexed by the sets of $\pi$ with lines between them. If there is no line between $\P_i$ and $\P_j$, then it means that there is no edge $yz$ for any $y\in\P_i$ and $z\in\P_j$. If there is a line between $\P_i$ and $\P_j$, then a number on the line near circle $\P_i$ denote corresponding parameter $c_{ij}$. A number above or bellow a circle $\P_i$ denote the corresponding parameter $c_{ii}$ (see Figure \ref{2e} for an example).

\begin{figure}[h!]
\begin{center}
\begin{tikzpicture}[scale=.43]
\draw [line width=.8pt] (-7.04,-0.01)-- (0,2.53);
\draw [line width=.8pt] (0,2.53)-- (-0.02,-2.51);
\draw [line width=.8pt] (-0.02,-2.51)-- (6.98,-0.99);
\draw [line width=.8pt] (6.98,-0.99)-- (6.98,1.01);
\draw [line width=.8pt] (6.98,1.01)-- (1,-5.51);
\draw [line width=.8pt] (1,-5.51)-- (1.02,5.51);
\draw [line width=.8pt] (1.02,5.51)-- (-7.04,-0.01);
\draw [line width=.8pt] (1,-5.51)-- (-7.04,-0.01);
\draw [line width=.8pt] (-7.04,-0.01)-- (-0.02,-2.51);
\draw [line width=.8pt] (-0.02,-2.51)-- (6.98,1.01);
\draw [line width=.8pt] (6.98,1.01)-- (1.02,5.51);
\draw [line width=.8pt] (1.02,5.51)-- (0,2.53);
\draw [line width=.8pt] (0,2.53)-- (6.98,-0.99);
\draw [line width=.8pt] (6.98,-0.99)-- (1,-5.51);
\draw [fill=black] (0,2.53) circle [radius=0.2];
\draw [fill=black] (-0.02,-2.51) circle [radius=0.2];
\draw [fill=black] (6.98,-0.99) circle [radius=0.2];
\draw [fill=black] (6.98,1.01) circle [radius=0.2];
\draw [fill=black] (1,-5.51) circle [radius=0.2];
\draw [fill=black] (1.02,5.51) circle [radius=0.2];
\draw [fill=black] (-7.04,-0.01) circle [radius=0.2];
{\tiny
\node at (-7.54,-0.01) {$0$};
\node at (0.5,2.53) {$1$};
\node at (-0.07,-3.01) {$2$};
\node at (7.48,-0.99) {$3$};
\node at (7.48,1.01) {$4$};
\node at (1.5,-5.51) {$5$};
\node at (1.52,5.51) {$6$};
}
\end{tikzpicture}\qquad\qquad
{\small
\begin{tikzpicture}[scale=.47]
\draw (-7.04,-0.01) -- (0,4.01);
\draw (-7.04,-0.01) -- (0.006,-4.014);
\draw (0,4.01) -- (0.006,-4.014);
\draw (0,4.01) -- (7.024,-0.01);
\draw (0.006,-4.014) -- (7.024,-0.01);
\draw [fill=white] (-7.04,-0.01) circle [radius=1.1];
\draw [fill=white] (0,4.01) circle [radius=1.1];
\draw [fill=white] (0.006,-4.014) circle [radius=1.1];
\draw [fill=white] (7.024,-0.01) circle [radius=1.1];
\node at (-7.04,-0.01) {$\P_0$};
\node at (0,4.01) {$\P_1$};
\node at (0.006,-4.014) {$\P_2$};
\node at (7.024,-0.01) {$\P_3$};
\node  at (-6.04,0.99) {$2$};
\node  at (-6.04,-1.01) {$2$};
\node  at (-7.04,-1.51) {--};
\node  at (0,5.51) {$1$};
\node  at (1.3,3.41) {$1$};
\node  at (-1.3,3.41) {$1$};
\node  at (0.3,2.51) {$1$};
\node  at (1.3,-3) {$2$};
\node  at (-1.3,-3) {$1$};
\node  at (0.006,-5.51) {--};
\node  at (0.3,-2.51) {$1$};
\node  at (7.024,-1.51) {$1$};
\node  at (5.624,0.9) {$1$};
\node  at (5.624,-0.7) {$2$};
\end{tikzpicture}
}
\caption{Cayley graph $\mbox{Cay}(\ZZ_{7};\{1,2\})$ and its intersection diagram (around vertex $0$). Adjacency algebra of this graph is closed with respect to Hadamard multiplication (this follows from Theorems \ref{1f} and \ref{1d}; or independently from Theorem \ref{1h}).}
\label{2e}
\end{center}
\end{figure}
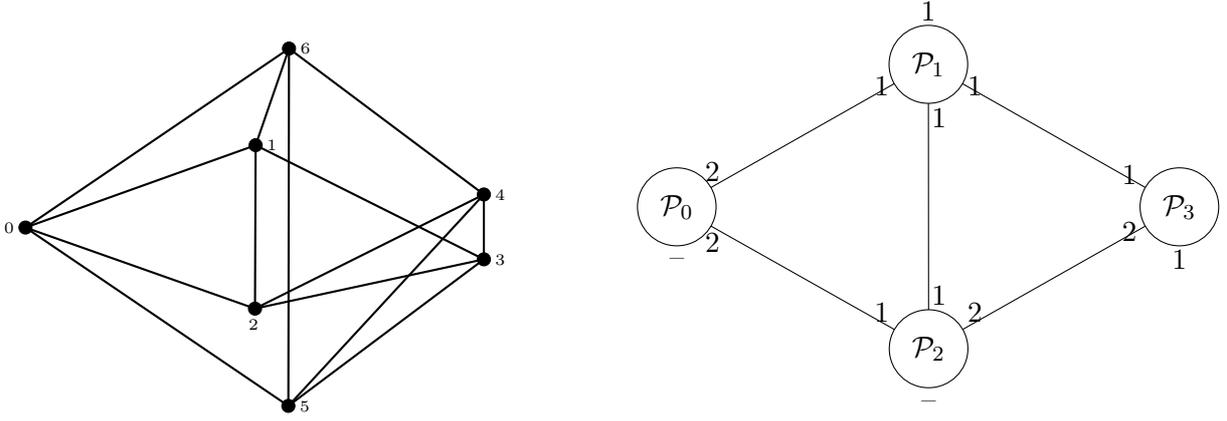


\subsection{The adjacency algebra}
\label{2f}

Let $\CC$ denote the complex number field, and let $\G$ denote a graph with vertex set $X$ and diameter $D$. For $0 \le i \le D$ let $A_i$ denote the matrix in $\Mat_X(\CC)$ with $(x,y)$-entry
\begin{equation}
\label{2g}
  (A_i)_{x y} = \left\{ \begin{array}{ll}
                 1 & \hbox{if } \; \dist(x,y)=i, \\
                 0 & \hbox{if } \; \dist(x,y) \ne i \end{array} \right. \qquad (x,y \in X)
\end{equation}
We call $A_i$ the {\it distance-$i$ matrix} of $\G$. We abbreviate $A:=A_1$ and call this the {\it adjacency matrix} of $\G$. Observe that $A_0 = I$, $\sum_{i=0}^D A_i = J$, $\overline{A_i} = A_i \;(0 \le i \le D)$, and $A_i^\top = A_i  \;(0 \le i \le D)$, where $I$  denotes the identity matrix (respectively, all-ones matrix) in  $\Mat_X(\CC)$.

Let $\V=\CC^X$ denote the vector space over $\CC$ consisting of column vectors whose coordinates are indexed by $X$ and whose entries are in $\CC$. We call $\V$ the {\it standard module}. We endow $\V$ with the Hermitian inner product $\langle \cdot , \cdot \rangle_{\V}$ that satisfies $\langle u,v \rangle_{\V} = u^\top\overline{v}$ for $u,v \in V$, where ``$\top$" denotes transpose and ``$\overline{\phantom{v}}$" denotes complex conjugation.
Recall that
$$
\langle u, Bv \rangle_{\V} = \langle \overline{B}^\top u, v \rangle_{\V}
$$
for $u,v\in \V$ and $B \in \Mat_X(\CC)$.

We observe that $\Mat_X(\CC)$ acts on $\V$ by left multiplication, and since $A$ is a real symmetric matrix, $A$ can be interpret as a self-adjoint operator on $\V$. This yield that $\V$ has an orthogonal basis consisting of eigenvectors of $A$ (see, for example, \cite[Chapter 7]{AS}). Assume that $\G$ has $d+1$ distinct eigenvectors. For each eigenvalue $\lambda_i$ $(0\le i\le d)$ of $\G$ let $U_i$ be the matrix whose columns form an orthonormal basis of its eigenspace $\V_i := \ker(A-\lambda_iI)$, and let $m_i:=\dim(\V_i)$. The {\it primitive idempotents} of $A$ are the matrices
$$
E_i := U_iU_i^\top\qquad(0\le i\le d).
$$
Some well-known properties of the primitive idempotents are the following:
\begin{enumerate}[label=\rm(e-\roman*)]
\item
$p(A)=\sum\limits_{i=0}^d p(\lambda_i) E_i$, for every polynomial $p\in\CC[t]$.
\item
$\trace(E_i)=m_i$ $(0\le i\le d)$.
\item
$E_i^\top=E_i$ $(0\le i \le d)$.
\item
$\G$ regular and connected $\Rightarrow$ $E_0=|X|^{-1}J$.
\item
$E_iE_j=\delta_{ij} E_i$ $(0\le i,j\le D)$.
\item
$E_iA=AE_i=\lambda_i E_i$ $(0\le i\le d)$.
\item
$E_0+E_1+\cdots+E_d=I$.
\item
$\ds{E_i=\frac{1}{\pi_i}\prod_{\stackrel{j=0}{j\not=i}}^d(A-\lambda_jI)}$ $(0\le i\le d)$, where $\pi_i=\prod_{j=0(j\not=i)}^d(\lambda_i-\lambda_j)$.
\item
$A^h=\sum_{i=0}^d \lambda_i^h E_i$ $(h\in\NN)$.
\item{\label{2j}}
$E_i$ is the orthogonal projector onto $\V_i=\ker(A-\lambda_iI)$ $(0\le i \le d)$. Moreover, $\im(E_i)=\ker(A-\lambda_iI)$ and $\ker(E_i)=\im(A-\lambda_iI)$.
\end{enumerate}
Proofs of properties (e-i)--(e-x) can be found, for example, in \cite[Chapter 2]{SP}. Recall that, the number of walks of length $\ell\ge 0$ between vertices $u$ and $v$ of $\G$ is the $(u,v)$-entry of $A^\ell$, and that the eigenvalues of a real symmetric matrix are real numbers (see, for example, \cite{PT}). From this fact, together with (e-iv) and (e-viii), we have the following result:

\begin{corollary}[{Hoffman polynomial}, {\rm{\cite[Theorem 1]{AJH}}}]
\label{2h}
A graph $\G$  is regular and connected if and only if there exists a polynomial $H\in\RR[t]$ such that $J=H(A)$.
\end{corollary}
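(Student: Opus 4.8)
The plan is to establish the two implications separately, using the primitive-idempotent machinery of Subsection~\ref{2f} for the ``only if'' direction and elementary walk-counting for the ``if'' direction.

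For the forward implication, suppose $\G$ is regular and connected. Then property (e-iv) gives $E_0=|X|^{-1}J$, so that $J=|X|\,E_0$; here $\lambda_0$ is the valency $k$, whose eigenspace is spanned by the all-ones vector. By property (e-viii) the idempotent $E_0$ is already a polynomial in $A$, namely
$$
E_0=\frac{1}{\pi_0}\prod_{j=1}^d(A-\lambda_jI),\qquad \pi_0=\prod_{j=1}^d(\lambda_0-\lambda_j).
$$
Hence $J=H(A)$ with
$$
H(t)=\frac{|X|}{\pi_0}\prod_{j=1}^d(t-\lambda_j).
$$
Because the eigenvalues of the real symmetric matrix $A$ are real, $H$ lies in $\RR[t]$, as required.

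For the converse, suppose $J=H(A)$ for some $H=\sum_{\ell=0}^m c_\ell t^\ell\in\RR[t]$. To deduce connectedness, recall that $(A^\ell)_{uv}$ counts the walks of length $\ell$ from $u$ to $v$, so $(H(A))_{uv}=\sum_{\ell=0}^m c_\ell (A^\ell)_{uv}$. If $u$ and $v$ lay in distinct connected components, then no walk of any length joins them and each term vanishes (including the $\ell=0$ term, since $I$ has zero off-diagonal entries), giving $(H(A))_{uv}=0$ and contradicting $J_{uv}=1$; hence $\G$ is connected. To deduce regularity, observe that $A$ commutes with every polynomial in $A$, so $AJ=A\,H(A)=H(A)\,A=JA$. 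Evaluating entries gives $(AJ)_{uv}=\sum_w A_{uw}=\deg(u)$ and $(JA)_{uv}=\sum_w A_{wv}=\deg(v)$ for all $u,v$, whence $\deg(u)=\deg(v)$ for all vertices and $\G$ is regular.

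This classical statement presents no genuine obstacle once the idempotent identities of Subsection~\ref{2f} are available; the only point meriting attention is the bookkeeping in the converse, namely checking that the vanishing of all walk-counts $(A^\ell)_{uv}$ across components really forces $(H(A))_{uv}=0$ term by term. Each step then reduces to a one-line verification.
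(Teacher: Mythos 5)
Your proposal is correct and follows essentially the same route the paper intends: the paper derives the corollary from walk-counting, the realness of the eigenvalues of a symmetric matrix, and properties (e-iv) and (e-viii), which is precisely what you make explicit (using $J=|X|E_0$ with $E_0$ given as a real polynomial in $A$ for the forward direction, and walk-counts plus the $AJ=JA$ commutation for the converse). No gaps; the write-up just fills in the one-line verifications the paper leaves implicit.
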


Now, using the above notation, the vector space
$$
\A=\RR_d[A]=\span\{I,A,A^2,\ldots,A^d\}
$$
is an algebra, with the ordinary product of matrices and orthogonal basis $\{E_0,E_1,\ldots,E_d\}$, called the {\it adjacency algebra}. Moreover, the vector space
$$
\D=\span\{I,A,A_2,\ldots,A_D\}
$$
forms an algebra with the  Hadamard product `$\circ$' of matrices, defined by $(M\circ N)_{uv}=(M)_{uv}(N)_{uv}$. We call $\D$ the {\em distance $\circ$-algebra}. Note that, when $\G$ is regular, $I,A,J\in \A\cap\D$, and thus $\dim(\A\cap\D)\ge 3$ assuming that $\G$ is not a complete graph (in this exceptional case, $J=I+A$). In this algebraic context, an important result is that $\G$ is distance-regular if and only if $\A = \D$, which is therefore equivalent to $\dim(\A\cap \D) = d + 1$ (and hence $d = D$); see, for example, \cite{NB, BCN, RP}. A related concept  was introduced by Weichsel \cite{PW2}: a graph
is called {\em distance-polynomia} if $\D \subset \A$, that is, if each distance matrix is a polynomial in $A$. In other words, a
graph with diameter $D$  is distance-polynomial if and only if $\dim(\A \cap \D) = D + 1$.

In general the algebras $\A$ and $\D$ are different from the algebra $\N=(\langle A_0,A_1,\ldots,$ $A_D \rangle,+,\cdot)$ generated by the set of distance-$i$ matrices $\{ A_0,A_1,\ldots, A_D\}$ with respect to the ordinary product of matrices. Figure \ref{2i} shows a diagram with some inclusion relationships when $\A$ is closed under Hadamard multiplication.

\begin{figure}[t]
\centering
\begin{tikzpicture}
\node (30) at (0,-4.5) {$(\langle I,A,\ldots,A^d\rangle,+,\circ)$};
\node (41) at (3,-6)  {$(\D,+,\circ)$};
\node (42) at (-3,-6)  {$(\A,+,\cdot)$};
\node (50) at (0,-7.5) {$\{I,A,J\}$};
\draw (30) -- (41);
\draw (30) -- (42);
\draw (41) -- (50);
\draw (42) -- (50);
\end{tikzpicture}
\caption{Inclusion diagram when the adjacency algebra $\A$ is closed under Hadamard multiplication. A line segments that goes upward from $M$ to $N$ means that $N$ contains $M$. In case when $\G$ is distance-regular graph we have $\A=\D$.}
\label{2i}
\end{figure}
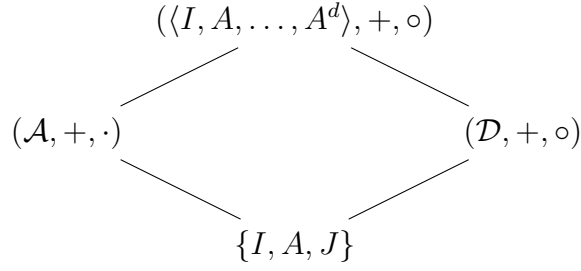


\section{The symmetric association scheme}
\label{3a}
In this section we prove Theorem \ref{1c}.
(If the adjacency algebra $\A=\langle I,A,\ldots,A^d\rangle$ of a graph is closed under Hadamard multiplication, then there exists a basis $\{F_0,F_1,\ldots,F_d\}$ of mutually disjoint $(0,1)$-matrices satisfying the conditions of a symmetric association scheme.)

Let us call two $(0,1)$-matrices $B$, $C$ disjoint if $B\circ C=0$. For the moment, let $\F$ denote a vector space of symmetric $n\times n$ matrices. In \cite[Theorem 2.6.1(i)]{BCN} it was proved that $\F$ has a basis of mutually disjoint $(0,1)$-matrices if and only if $\F$ is closed under Hadamard multiplication. In \cite[Theorem 2.6.1(iii)]{BCN} it was proved that $\F$ is the Bose-Mesner algebra of an association scheme if and only if $I,J\in\F$ and $\F$ is closed under both ordinary and Hadamard multiplication. Thus, in some sense, our Theorem \ref{1c} is a re-proof of \cite[Theorem 2.6.1]{BCN} using a different technique. We emphasize that the notation and technique that we use it the proof of Theorem \ref{1c} is important for the application in Subsection \ref{3c}, as well as for the rest of the paper.

\medskip
\noindent
{\bf Proof of Theorem \ref{1c}.}
Let $X$ denote the vertex set of $\G$ and let $b_i$ $(0\le i\le d)$ denote the row vectors, obtained from $A^i$ $(0\le i\le d)$ as concatenation of the rows of $A^i$. That is, if
$$
A^i=
\left(
\begin{matrix}
d^i_{11} & d^i_{12} & \ldots & d^i_{1,|X|}\\
d^i_{21} & d^i_{22} & \ldots & d^i_{2,|X|}\\
\vdots & \vdots & ~ & \vdots\\
d^i_{|X|,1} & d^i_{|X|,2} & \ldots & d^i_{|X|,|X|}\\
\end{matrix}
\right)
$$
then
$$
b_i=\left(
\begin{matrix}
d^i_{11} & d^i_{12} & \ldots & d^i_{1,|X|} &
d^i_{21} & \ldots & d^i_{|X|,1} & d^i_{|X|,2} & \ldots & d^i_{|X|,|X|}
\end{matrix}
\right).
$$
Define $B$ as the $d\times|X|^2$ matrix constructed from the row set $\{b_0,b_1,\ldots,b_d\}$,
$$
B=\left(
\begin{matrix}
- & b_{0} & - \\
- & b_{1} & - \\
~ & \vdots & ~ \\
- & b_{d} & - \\
\end{matrix}
\right).
$$
It is not hard to see that the vector space $\A$ is isomorphic to the vector space $\C=\C(\Gamma):=\im(B^\top)$,
$$
\C:=\im(B^\top)=\{\gamma_0b_0^\top+\gamma_1b_1^\top+\ldots+\gamma_db_d^\top\,|\,\gamma_0,\gamma_1,\ldots,\gamma_d\in\RR\}.
$$
Using elementary row operation on $B$, we compute $C$ as the reduced row echelon form of the matrix $B$. That is,
$$
B\stackrel{\rm row}{\sim}C=
\left(
\begin{matrix}
1 & * & 0 & 0 & * & * & 0 & * & * & \ldots \\
0 & 0 & 1 & 0 & * & * & 0 & * & * & \ldots \\
0 & 0 & 0 & 1 & * & * & 0 & * & * & \ldots \\
\vdots & ~ & ~ & ~ & \vdots & ~ & 0 & * & ~ & \vdots \\
0 & 0 & 0 & 0 & 0 & 0 & 1 & * & * & \ldots \\
\end{matrix}
\right)=
\left(
\begin{matrix}
- & c_{0} & - \\
- & c_{1} & - \\
~ & \vdots & ~ \\
- & c_{d} & - \\
\end{matrix}
\right).
$$
Note that the set of nonzero vectors $c_i$ $(0\le i\le d)$ are linearly independent. Finally, we can use row vectors $\{c_i\}_{i=0}^d$ to construct our matrices $F_i$ in the following way. If
$$
c_i=
\left(
\begin{matrix}
c^i_{11} & c^i_{12} & \ldots & c^i_{1,|X|} &
c^i_{21} & \ldots & c^i_{|X|,1} & c^i_{|X|,2} & \ldots & c^i_{|X|,|X|}
\end{matrix}
\right).
$$
then
$$
F_i=
\left(
\begin{matrix}
c^i_{11} & c^i_{12} & \ldots & c^i_{1,|X|}\\
c^i_{21} & c^i_{22} & \ldots & c^i_{2,|X|}\\
\vdots & \vdots & ~ & \vdots\\
c^i_{|X|,1} & c^i_{|X|,2} & \ldots & c^i_{|X|,|X|}\\
\end{matrix}
\right).
$$
We claim that the set $\{F_0,F_1,\ldots,F_d\}$ have the required properties. By construction, it is routine to show that the  matrices $F_0,F_1,\ldots,F_m$ are linearly independent.

\medskip
(i) Pick $F_i$ for some $i$ $(0\le i\le d)$. Since $\{F_0,F_1,\ldots,F_d\}$ is a basis of the vector space $\A$, which is closed under both ordinary multiplication and Hadamard multiplication,  there exists scalars $\alpha_0,\ldots,\alpha_d$ such that $F_i\circ F_i=\sum_{h=0}^d \alpha_h F_h$. Now pick $F_j$ (where $j\ne i$) and consider $(x,y)$-entry of $F_j$ which correspond to first nonzero entry of row vector $c_j$. We have $(F_j)_{xy}=1$ and $(F_h)_{xy}=0$ $(0\le h\le d,~h\ne j)$. This yield that if $\alpha_j \ne 0$ then $(F_i\circ F_i)_{xy}=\alpha_j\ne 0$, a contradiction (because $(F_i)_{xy}=0$). Thus $F_i\circ F_i=\alpha_iF_i$. To show that $\alpha_i=1$, pick $(u,v)$-entry of $F_i$ which correspond to first nonzero entry of row vector $c_i$. We have $(F_i)_{uv}=1$ and with that $1=(F_i\circ F_i)_{uv}=(\alpha_iF_i)_{uv}=\alpha_i$.

This yield $F_i\circ F_i= F_i$, and with that all entries of $F_i$ $(0\le i\le d)$ are zeros and ones. On a similar way as above, we can show that $F_i\circ F_j=\boldsymbol{O}$, for $i\ne j$. The result follows.

\medskip
(ii) Since $I\in\A=\span\{F_0,F_1,\ldots,F_d\}$ and the set $\{F_0,F_1,\ldots,F_d\}$ is a basis of $\circ$-idempotents there exists index set $\Omega$ such that $\sum_{\alpha\in\Omega} F_{\alpha}=I$. If $|\Omega|>1$ then we can pick $\alpha\in\Omega$, $y,z\in X$, such that $(I_\alpha)_{yy}=1$ and $(I_\alpha)_{zz}=0$. For an algebra $\A$ we have that for any $B,C\in\A$, $BC=CB$, and since $J\in\A$ we have $I_\alpha J=JI_\alpha$. If we compute $(y,z)$-entry of $I_\alpha J$ and $JI_\alpha$ we get $(I_\alpha J)_{yz}=1$, $(JI_\alpha)_{yz}=0$, a contradiction. The result follows.

\medskip
(iii) Since $\G$ is a regular connected graph we have $J\in\A$. On the other hand, by (i) the set $\{F_0,F_1,\ldots,F_d\}$ is a basis of $\circ$-idempotents. The result follows.

\medskip
(iv) Since $F_i$ $(0\le i\le d)$ are real symmetric matrices, the result follows.

\medskip
(v) Note that $\{F_0,F_1,\ldots,F_d\}$ is a basis of $\A$.
\hfill$\hbox{\rule{3pt}{6pt}}$
\medskip



\subsection{The number of different eigenvalues of a Hermitian matrix}
\label{3c}

As shown in this subsection, the technique used in the proof of Theorem \ref{1c} can be used to find the number of distinct eigenvalues of a symmetric (or Hermitian) matrix.
The motivation for this algorithm is that, for the solution of some problems that deal with eigenvalues,  we only need  to know  the number of different ones.
Moreover, if we have a large matrix (or a set of large matrices), computing all the eigenvalues is time consuming.

Also there is problem of distinct two different eigenvalues when we work with computer programs. All computers work only with rational numbers. So,  if we deal with a large number of eigenvalues, even  when we compute them in the usual way, we always have the problem of distinguishing two of them, because their values are often close to each other, up to some decimal place. Our method can avoid this.

Our method is especially applicable in algebraic and spectral graph theory, since symmetric $(0,1)$-matrix represent adjacency matrix of a graph. Also, for example, see Corollary \ref{gs}. For more information about algebraic and spectral graph theory we recommend \cite{NB,PT}.

As before, let $X$ denote a set with $|X|=n$ elements, $\Mat_X(\CC)$  the set of $n\times n$ matrices over $\CC$ with rows and columns indexed by $X$, and  $A\in\Mat_X(\CC)$ a Hermitian matrix. In this subsection we describe a simple algorithm to find the number $d+1$ of distinct eigenvalues of $A$ (without computing them).
Notice that it suffices to find the dimension of the vector space $\A$ spanned by the powers of $A$.
With this aim, we can
consider the set $\{A^0,A^1,\ldots,A^k\}$ for some positive integer $k$. Then, as in the proof of Theorem \ref{1c}, we construct the matrix $B$ and compute $C=(c_{ij})$ as its reduced row echelon form.
Then, notice that the set of nonzero row vectors $c_i$ $(0\le i\le k)$ are linearly independent. Thus,
we only need to find the smallest $k$ so that $c_k\neq 0$  to conclude that $A$ has $d+1=s+1$ different eigenvalues. The problem with this approach is that to decide what initial number $k$ to pick. Of course, $k=n$ will always work, but, in this case, we need to compute all $A^i$ $(0\le i\le n)$ which is not the best choice if the number of distinct eigenvalues is small compared with $n$.

To overcome the above problem, we propose an algorithm based on the Gram-Schmidt method.
Recall that this method produces a set of orthogonal (and, hence, linearly independent) vectors in an inner product space, which, in our case $\CC_n=\Mat_X(\CC)$ is equipped with the scalar product
\begin{equation}
\label{inner-prod}
\langle A,B\rangle_{\CC_n} :=\frac{1}{n} \trace (A B)=\frac{1}{n}\summ (A\circ \overline{B}), \qquad A,B\in Mat_X(\CC),
\end{equation}
where $\summ(M)$ denotes the sum of all entries of $M$ (the term $\frac{1}{n}$ is a normalization factor to get $\|I\|_{\CC_n}=1$) .
Then, if we apply the method from the matrices $I,A,A^2,\ldots$, we get a sequence $A_0,A_1,\ldots$, where $A_i$ is a polynomial of degree $i$ in $A$, for $i=0,\ldots,d$, $A_0,\ldots,A_{d}$ are orthogonal, and $A_i=0$ for $i>d$. Consequently, we only need to apply the process until we reach the first zero matrix.
Moreover, notice that if, when computing $A_{k+1}$, instead of the power $A^{k+1}$, we  use $A_{k}A$, we have
$\langle A_{k}A, A_i \rangle=\langle A_{k}, AA_i \rangle=0$ for each $i<k-1$ (since $AA_i$ is a polynomial in $A$ of degree $<k$). Then, the algorithm is as follows:

\medskip

\begin{algorithm}
\label{3d}
{\rm
Let $A\in\CC_n=\Mat_X(\FF)$ denote a Hermitian matrix. The following algorithm produces as a result the number of different eigenvalues of matrix $A$, without computing them explicitly.

\medskip
\noindent
{\bf Input:} A Hermitian matrix $A$.\\
\noindent
{\bf Output:} The number $d+1$ of distinct eigenvalues of $A$.
\begin{itemize}
\item[{\bf 1.}]
{\bf Initialize} $A_0:=I$, $A_1:=A$, and $k:=1$.
\item[{\bf 2.}]
{\bf Compute} the matrix $A_{k+1}=A_kA-\sum_{i=k-1}^k \frac{\langle A_kA,A_i \rangle_{\CC_n}}{\|A_i\|_{\CC_n}^2}A_i$.
\item[{\bf 3.}]
{\bf While} $A_{k+1}\neq 0$, {\bf do} $k:=k+1$ and {\bf go to} step {\bf 2}.
\item[{\bf 4.}]
As a result of applying steps {\bf 2} and {\bf 3}, we get a set of matrices $\{A_0,A_1,\ldots,A_r,A_{r+1}\}$ with $A_i\neq 0$ for $i=0,\ldots,r$, and $A_{r+1}=0$
\item[{\bf 5.}]
{\bf Conclude} that $A$ has $d+1=r+1$ different eigenvalues.
\end{itemize}
}
\end{algorithm}

Thus, if $A$ is a Hermitian matrix such that $\A=\span\{A^0,A,\ldots,A^d\}$ is closed under Hadamard product,
we can use Algorithm \ref{3d} to compute the standard basis $\{F_0,F_1,\ldots, F_d\}$ of $\A$ by following the proof of Theorem \ref{1c}(i). Just apply the algorithm to get a set $\{A_0,A_1,\ldots,A_d\}$ of non-zero matrices such that $d+1$ is the number of distinct eigenvalues of $A$, and, starting from them, proceed as in the proof. 

\begin{remark}
\label{r1}
{\rm For application purposes, assume that the entries of $A$ are integers, and we want to work only with integers in the whole procedure avoiding numerical computations. Then, instead of the scalar product in \eqref{inner-prod}, we can use the inner product $\langle A,B\rangle_{\CC_n} =\trace (A B)$ and change Algorithm \ref{3d} accordingly (modifying step 2 also, to avoid devision).}
\end{remark}

\subsection{Checking distance-regularity}
\label{3c2}
If fact, if $A$ is the adjacency matrix of a graph $\G$ with $d+1$ eigenvalues, the above inner product \eqref{inner-prod} is denoted as $\langle \cdot,\cdot\rangle_{\G}$, and the obtained matrices $A_0,A_1,\ldots,A_d$ coincide, up to a multiplicative constant, with the so-called {\em predistance} matrices of $\G$,  see \cite{FP}. In turn, such matrices are obtained by evaluating at $A$ the predistance polynomials $p_0,\ldots,p_d$, introduced in \cite{fg97}.
In particular, if $\G$ is distance-regular, the predistance polynomials and predistance matrices are, respectively, the distance polynomials and distance matrices of $\G$.
If $\G$ has spectrum $\spec(\G) = \spec (A) = \{\lambda_0^{m_0},\lambda_1^{m_1},\dots,
\lambda_d^{m_d}\}$,
where $\lambda_0>\lambda_1>\cdots >\lambda_d$,
the {\em predistance polynomials}
$p_0,p_1,\ldots,p_d$
constitute an orthogonal sequence of polynomials ($\dgr (p_i)=i$) with respect to the scalar product

\begin{equation}\label{inner-prod2}
\langle f, g\rangle_{\G} := \frac{1}{n} \sum_{i=0}^d m_i f(\lambda_i) g(\lambda_i)= \frac{1}{n}\trace (f(A)g(A)) =  \langle f(A), g(A)\rangle_{\G},
\end{equation}
normalized in such a way that
$\|p_i\|_{\G}^2=p_i(\lambda_0)$ (we know that $p_i(\lambda_0)>0$ for every $i=0,\ldots,d$).

As every sequence of orthogonal polynomials, the predistance polynomials satisfy a three-term recurrence of the form
\begin{equation}
\label{recur}
xp_{i}=b_{i-1}p_{i-1}+a_ip_i+c_{i+1}p_{i+1}\qquad (0\le i\le d),
\end{equation}
where the constants $b_{i-1}$, $a_i$, and $c_{i+1}$ are the Fourier coefficients of $xp_i$ in terms of $p_{i-1}$, $p_i$, and $p_{i+1}$, respectively (and $b_{-1}=c_{d+1}=0$).
Moreover, $p_0+p_1+\cdots+p_d=H$, the Hoffman polynomial of Corollary \ref{2h}. Hence, if $\G$ is $k$-regular, we can apply Algorithm \ref{3d} to obtain the predistance matrices if we normalize each $A_i$, for $i=0,\ldots,d$, in such a way that $\|A_i\|_{\G}^2=\langle A_i,J \rangle_{\G}$, which satisfy
\begin{equation}
\label{sum-p=J}
A_0+A_1+\cdots + A_d = p_0(A)+p_1(A)+\cdots +p_d(A)=H(A)=J.
\end{equation}

Some recent characterizations of distance-regularity in terms of the predistance polynomials and distance matrices $A_d$ and $A_{d-1}$ are the following:
A regular graph $\G$ with $d + 1$ distinct eigenvalues, diameter $D = d$, is distance-regular if and only if either
\begin{enumerate}
[label=\rm(DR\arabic*)]
\item
$A_d\in\A$,
\item
$A_d = p_d(A)$,
\item
$A_i = p_i(A)$ for $i=d-2,d-1$.
\end{enumerate}
Every of the above conditions assures the existence of all the distance matrices $A_0(=I), A_1(=A),A_2,\ldots,A_d$, which is a well-known characterization of distance-regularity. More generally, in \cite{DDF}, a graph $\G$ is said to be {\em $k$-partially distance-regular},
for some $k<d$, if there exist the distance matrices $A_i$ for $i=0,\ldots,k$.
For more details, see \cite{EvD, DDF, Fpdr, FGY}.

Now, as another possible application of Algorithm  \ref{3d} we have the following result.
\begin{proposition}
\label{propo1}
Let $\G$ be a regular graph with diameter $D$, and $d+1$ different eigenvalues. Let $A_i$ be the matrices obtained by applying the Algorithm \ref{3d}, and normalizing them so that $\|A_i\|_{\G}^2=\langle A_i,J \rangle_{\G}$, for $i=0,1\ldots$, that is,
$A_i \leftarrow \frac{\langle A_i, J\rangle_{\G}}{\|A_i\|_{\G}^2}A_i$. If the following conditions hold:
\begin{enumerate}[label=\rm(\roman*)]
\item
$A_{D+1}=0$ and $A_D\neq 0$,
\item
$A_D$ is a $(0,1)$-matrix,
\item
$A_i$, $i=0,\ldots,D-1$, are nonnegative matrices.
\end{enumerate}
then, $\G$ is a distance-regular graph.
\end{proposition}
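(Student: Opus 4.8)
The plan is to prove that, under the three hypotheses, the matrix $A_D$ produced by Algorithm \ref{3d} is exactly the distance-$D$ matrix of $\G$; once this is established, that distance matrix lies in $\A$ (being equal to $p_D(A)$), and the cited characterization {\rm (DR1)} (equivalently {\rm (DR2)}) forces $\G$ to be distance-regular. First I would record the structural facts. By the discussion preceding \eqref{sum-p=J}, after the stated normalization the matrices produced by the algorithm are the predistance matrices $A_i=p_i(A)$ with $\dgr(p_i)=i$, and they satisfy $A_0+A_1+\cdots+A_d=J$. Condition (i) pins down $d=D$: indeed $A_D\neq 0$ gives $D\le d$, while $A_{D+1}=0$ gives $D\ge d$, since $A_i\neq 0$ holds precisely for $i\le d$. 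In particular $\G$ is regular with $d+1$ distinct eigenvalues and $D=d$, so the characterization is applicable. Two elementary observations about $A_i=p_i(A)$ will be used repeatedly: since $\dgr(p_i)=i$ and $(A^k)_{xy}=0$ whenever $k<\dist(x,y)$, the support of $A_i$ is contained in the set of pairs at distance $\le i$; and for a pair $(x,y)$ with $\dist(x,y)=i$ only the leading term contributes, so $(A_i)_{xy}=\ell_i\,(A^i)_{xy}$, where $\ell_i$ denotes the leading coefficient of $p_i$.

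The key lemma, and the main obstacle, is that $\ell_i>0$, equivalently $(A_i)_{xy}>0$ for every pair at distance exactly $i$. This is a standard property of the predistance polynomials, and I would justify it through the theory of orthogonal polynomials: with respect to the positive-definite inner product \eqref{inner-prod2}, the $p_i$ are positive scalar multiples of the monic orthogonal polynomials $q_i$, whose roots all lie in the open interval $(\lambda_d,\lambda_0)$. Since $q_i$ is monic with all roots strictly below $\lambda_0$, we get $q_i(\lambda_0)>0$, and the normalization $\|p_i\|_{\G}^2=p_i(\lambda_0)$ forces $p_i=(q_i(\lambda_0)/\|q_i\|_{\G}^2)\,q_i$, a positive multiple of $q_i$; hence $\ell_i>0$. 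As $(A^i)_{xy}\ge 1$ whenever $\dist(x,y)=i$ (a geodesic is a walk of length $i$), this yields $(A_i)_{xy}>0$ on the distance-$i$ shell.

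Next I would identify $A_D$ with the distance-$D$ matrix. Evaluating $A_0+\cdots+A_D=J$ at a pair $(x,y)$ with $\dist(x,y)=D$, every $A_j$ with $j<D$ vanishes there by the support bound, so $(A_D)_{xy}=1$; thus $A_D$ equals $1$ on the whole distance-$D$ shell. It remains to show $A_D$ vanishes off that shell. Suppose $\dist(x,y)=m<D$ and $(A_D)_{xy}\neq 0$; by condition (ii) then $(A_D)_{xy}=1$. Reading $\sum_{i=0}^{D}(A_i)_{xy}=1$ and dropping the terms with $i<m$ by the support bound gives $\sum_{i=m}^{D}(A_i)_{xy}=1$; all these terms are nonnegative by conditions (ii) and (iii), and $(A_D)_{xy}=1$, so every remaining term vanishes, in particular $(A_m)_{xy}=0$. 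This contradicts the key lemma, which gives $(A_m)_{xy}>0$ because $\dist(x,y)=m$. Hence $A_D$ is zero off the distance-$D$ shell, so $A_D$ is precisely the distance-$D$ matrix of $\G$.

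Finally, since the distance-$D$ matrix equals $A_D=p_D(A)\in\A$, the characterization {\rm (DR1)} (equivalently {\rm (DR2)}, as the distance-$D$ matrix is $p_D(A)$) applies, and $\G$ is distance-regular. The crux, as noted, is the positivity of the leading coefficients $\ell_i$ of the predistance polynomials; the remainder is bookkeeping with the support bound, the nonnegativity hypotheses, and the normalization $\sum_i A_i=J$.
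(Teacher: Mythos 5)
Your proof is correct and follows essentially the same route as the paper's: identify $A_D$ with the distance-$D$ matrix using the support bound for $p_i(A)$, the normalization $A_0+\cdots+A_d=J$, and the nonnegativity hypotheses, then invoke {\rm (DR2)}. The only difference is that you prove positivity of the leading coefficients $\ell_i$ via orthogonal-polynomial theory (roots in $(\lambda_d,\lambda_0)$), which is more than needed: in the contradiction step it suffices that $\ell_m\neq 0$, which is immediate from $\dgr(p_m)=m$, and this weaker fact is all the paper uses.
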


\begin{proof}
We will prove that $A_d$ is the $d$-distance matrix of $\G$.
First, as we have already seen, (i) implies that $D=d$.
Then, if $u,v\in X$ are two vertices at distance $\dist(u,v)=d$, we have that
$(A_d)_{uv}=(p_d(A))_{uv}=(H(A))_{uv}=(J)_{uv}=1$.
Otherwise, assume that $\dist(u,v)=\ell<d$ and $(A_d)_{uv}=1$. Then, from \eqref{sum-p=J}
and (iii), it should be $(A_{\ell}+\cdots+A_{d-1})_{uv}=0$. In particular,
$(A_{\ell})_{uv}=0$, a contradiction since $A_{\ell}=p_{\ell}(A)$, with $\dgr(p_{\ell})=\ell$ and so $p_{\ell}$ has leading nonzero coefficient.
Then, if $\dist(u,v)<d$, then $(A_d)_{uv}=0$. Consequently, $A_d$ is as claimed, and (DS2) gives the result.
\end{proof}

Notice that, in fact, if $\G$ is indeed distance-regular, all the normalized matrices $A_0,A_1,\ldots$ obtained by the Algorithm \ref{3d} must be the corresponding distance matrices.

\begin{algorithm}
\label{3d2}
{\rm
The following algorithm returns `true' or `false' depending on whether a regular graph is distance-regular or not.

\medskip
\noindent
{\bf Input:} The adjacency matrix $A$ of a regular graph $\G$.\\
\noindent
{\bf Output:} `true' ($\G$ is distance-regular) or `false'.
\begin{itemize}
\item[{\bf 1.}]
{\bf Initialize} $A_0:=I$, $A_1:=A$, and $k:=1$.
\item[{\bf 2.}]
{\bf Compute} the matrix $A_{k+1}=A_kA-\sum_{i=k-1}^k \frac{\langle A_kA,A_i \rangle_{\G}}{\|A_i\|_{\G}^2}A_i$,
\item[{\bf 3.}]
{\bf Set} $A_{k+1} := \frac{\langle A_{k+1}, J\rangle_{\G}}{\|A_{k+1}\|_{\G}^2}A_{k+1}$
(normalization),
\item[{\bf 4.}]
{\bf If} $A_{k+1}$ is not a $(0,1)$-matrix, {\bf then}  {\bf return} false, and end the program.
\item[{\bf 5.}]
{\bf If} $A_{k+1}=0$ and $k<D$ {\bf then} {\bf return} false, and end the program.
\item[{\bf 6.}]
{\bf If}$A_{k+1}=0$ and $k=D$ {\bf then} {\bf return} true, and end the program.
\item[{\bf 7.}] {\bf Set} $k:=k+1$ and  {\bf go to} step {\bf 2}.
\end{itemize}
}
\end{algorithm}

\begin{remark}{\rm 
Here, a comment similar to  Remark \ref{1c} is in order. Indeed, notice that in Proposition \ref{propo1} and Algorithms \ref{3d} and \ref{3d2} the normalization of the matrices $A_i$ is not strictly neccessary (and in the algorithms is time consuming). We only need to require that all entries of $A_i$ have the same value, say, $c_i$. Then, if eventually  we want to get $(0,1)$-matrices, we simply apply $A_i\leftarrow \frac{1}{c_i}A_i$.}
\end{remark}


\section{The distance-faithful intersection diagrams}
\label{7a}

In this section we prove Theorem \ref{1g}. (If the adjacency algebra $\A=\{I,A,\ldots,A^d\}$ of a regular graph is closed under Hadamard multiplication, then there exist a common $x$-distance-faithful intersection diagram with $d+1$ cells for every vertex $x$.)

\medskip
\noindent
{\bf Proof of Theorem \ref{1g}.}
Since $\G$ is a regular graph, by Theorem \ref{1c} $\A$ has the standard basis $\{F_0,F_1,\ldots,F_d\}$. Let $X$ denote the vertex set of $\G$, pick two vertices $x,u\in X$ and define partitions $\pi_x$ and $\pi_u$ of $X$ in the following way
$$
\pi_x=\{\P_0(x),\P_1(x),\ldots,\P_d(x)\},
\qquad\mbox{ where }\quad
\P_i(x)=\{z\mid  (F_i)_{xz}=1\}~(0\le i\le d),
$$
$$
\pi_u=\{\P_0(u),\P_1(u),\ldots,\P_d(u)\},
\qquad\mbox{ where }\quad
\P_i(u)=\{w \mid (F_i)_{uw}=1\}~(0\le i\le d).
$$
To prove the claim, we need to show that the following (i)--(iii) hold.
\begin{enumerate}[label=\rm(\roman*)]
\item
All vertices in $\P_i(x)$ are on the same distance from $x$.
\item
$|\P_i(x)|=|\P_i(u)|$ $(0\le i\le d)$.
\item
There exist numbers $c_{ij}$ $(0\le i,j\le d)$ such that
\begin{enumerate}[label=\rm(\arabic*)]
\item
$\pi_x$ is equitable partition of $\G$ with corresponding parameters $c_{ij}$.
\item
$\pi_u$ is equitable partition of $\G$ with corresponding parameters $c_{ij}$.
\end{enumerate}
\end{enumerate}

\medskip
(i) We will first show that for any $z,w\in\P_i(x)$ we have $(A^\ell)_{xz}=(A^\ell)_{xw}$ $(0\le \ell\le d)$, that is, the number of walks of length $\ell$ from $x$ to $z$ is the same as the number of walks of length $\ell$ from $x$ to $w$. Since $\{F_h\}_{h=0}^d$ is a basis of $\A$ there exist scalars $\alpha_{ij}$ $(0\le i,j\le d)$ such that
$$
A^\ell=\sum_{j=0}^d \alpha_{\ell j} F_j
\qquad
(0\le \ell\le d).
$$
Since $z,w\in\P_i(x)$ we have $(F_i)_{xz}=(F_i)_{xw}=1$ and $(F_j)_{xz}=(F_j)_{xw}=0$ for $j\ne i$. This yields $(A^\ell)_{xz}=\alpha_{\ell i}=(A^\ell)_{xw}.$

Now we prove the claim (i) by contradiction. Assume that $z,w\in\P_i(x)$ and that $\dist(x,z)>\dist(x,w)=\ell$. Then, we  have $(A^{\ell})_{xw}\ne 0$ but $(A^{\ell})_{xz}= 0$, a contradiction.

\medskip
(ii) Pick $i$ $(0\le i\le d)$. If $\G$ is a regular graph of valency $k$, then $A\jj=k\jj$ (where $\jj$ is all-ones column vector). This yields $E_0\jj=\jj$ and $E_j\jj=\0$ for $1\le j\le d$ (see property (e-x) from page \pageref{2j}). Now, since $F_i\in\A=\span\{E_0,E_1,\ldots,E_d\}$, there exist scalars $\beta_{h}$ $(0\le h\le d)$ such that
$$
F_i=\sum_{h=0}^d \beta_h E_h.
$$
This implies $F_i\jj=\beta_0 E_0\jj = \beta_0 \jj$, that is, the sum of row entries is the same for every vertex. Therefore, $|\P_i(x)|=\sum_{z\in X} (F_i)_{xz}=\beta_0=\sum_{w\in X} (F_i)_{uw}=|\P_i(u)|$.

\medskip
(iii) Since $AF_i\in\span\{F_0,F_1,\ldots,F_d\}$, there exist scalars $c_{ij}$ $(0\le i,j\le d)$ such that
\begin{equation}
\label{7b}
AF_i=\sum_{h=0}^d c_{ih} F_h
\qquad(0\le i\le d).
\end{equation}
Pick $y\in\P_j(x)$. Now, from the left side of \eqref{7b} we have
$$
(AF_i)_{yx}=\sum_{z\in X} (A)_{yz} (F_i)_{zx} = |\G(y)\cap \P_i(x)|,
$$
and from the right side of \eqref{7b} we have
$$
(AF_i)_{yx}=\left(\sum_{h=0}^d c_{ih} F_h\right)_{yx}= c_{ij} (F_j)_{yx} = c_{ij}.
$$
Thus, $\pi_x$ is an equitable partition of $\G$ with corresponding parameters $c_{ij}$.
Similarly, pick $v\in\P_j(u)$. From one side of \eqref{7b} we have
$
(AF_i)_{vu}=|\G(v)\cap \P_i(u)|
$
and from the other side of \eqref{7b},
$
(\sum_{h=0}^d c_{ih} F_h)_{vu}= c_{ij}.
$
Therefore,  $\pi_u$ is also an equitable partition of $\G$ with corresponding parameters $c_{ij}$.
\hfill$\hbox{\rule{3pt}{6pt}}$
\medskip


\section{The quotient-polynomial graphs}
\label{go}

In this section we recall some old, and prove some new, properties of quotient-polynomial graphs. Recall that, for every $y,z\in X$, $(A^\ell)_{yz}$ $(0\le \ell\le d)$ is the number of walks of length $\ell$ between vertices $y$ and $z$.

\begin{definition}{\rm
\label{gb}
Let $\G$ denote a graph with vertex set $X$ and $d+1$ distinct eigenvalues. The column vector $\ww(y,z)\in\CC^{d+1}$ is defined as
$$
\ww(y,z)=\Big( (A^0)_{yz}, (A^1)_{yz},\ldots,(A^d)_{yz} \Big)^{\top}.
$$
}\end{definition}

\begin{remark}\label{gm}{\rm
If we have an equitable partition $\pi=\{\P_0,\P_1,\ldots,\P_r\}$ around $y$, $\P_0=\{y\}$,
with intersection numbers $b_{ij}$ we can compute the vector $\ww(y,z)$ $(y,z\in X)$ from its {\em quotient matrix} $B=(b_{ij})\in\Mat_{(r+1)\times (r+1)}(\CC)$, $(0\le i, j\le r)$.
The reason is that $\frac{1}{|\P_j|}(B^\ell)_{\P_j,\P_0}$ is the number of $\ell$-walks $(0\le\ell\le d)$ from $z$ to $y$ for any $z\in\P_j$ $(0\le j\le r)$ (see, for instance, \cite{DF}).
}\end{remark}

\begin{lemma}
\label{gc}
Let $\R=\{R_0,R_1,\ldots,R_r\}$ denote a partition of $X\times X$ such that, for each $i$ $(0\le i\le r)$, the pairs $(y,z),(u,v)\in X\times X$ belong to $R_i$ if and only if $\ww(y,z)=\ww(u,v)$. Then all pairs of vertices in a given $R_i$ are at the same distance.
\end{lemma}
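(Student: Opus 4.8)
The plan is to show that the distance $\dist(y,z)$ is completely encoded in the vector $\ww(y,z)$---concretely, that it equals the index of the first nonzero coordinate. Since every pair of vertices lying in a common class $R_i$ shares the same vector $\ww$ by hypothesis, they must then share the same distance, which is exactly the claim.

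First I would recall the combinatorial meaning of the coordinates: for $0\le \ell\le d$, the entry $(A^\ell)_{yz}$ is the number of walks of length $\ell$ from $y$ to $z$, hence a nonnegative integer that is strictly positive precisely when at least one walk of length $\ell$ exists. By the definition of distance as the length of a shortest walk, we therefore have $\dist(y,z)=\min\{\ell:(A^\ell)_{yz}\neq 0\}$, so the distance is read off as the position of the first nonzero coordinate of the full sequence $(A^0)_{yz},(A^1)_{yz},(A^2)_{yz},\ldots$

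The one point requiring care is that $\ww(y,z)$ only records the entries of $A^0,A^1,\ldots,A^d$, so a priori it could miss the first nonzero power if the distance exceeded $d$. To rule this out I would invoke the standard bound $D\le d$ on the diameter of a connected graph with $d+1$ distinct eigenvalues. Indeed, since $A$ has $d+1$ distinct eigenvalues its minimal polynomial has degree $d+1$, so $\A=\span\{I,A,\ldots,A^d\}$ consists of all polynomials in $A$; in particular every power $A^D$ is a linear combination of $I,A,\ldots,A^d$. If two vertices were at distance $D>d$, then $(A^\ell)_{yz}=0$ for all $\ell\le d$, and writing $A^D$ in terms of $I,A,\ldots,A^d$ would force $(A^D)_{yz}=0$ as well, contradicting the existence of a shortest walk of length $D$. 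Thus $\dist(y,z)\le d$ for every pair, and the first nonzero coordinate of the length-$(d+1)$ vector $\ww(y,z)$ occurs precisely at position $\dist(y,z)$.

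Finally I would conclude: if $(y,z)$ and $(u,v)$ lie in the same class $R_i$, then $\ww(y,z)=\ww(u,v)$ by hypothesis, so these two vectors have their first nonzero coordinate in the same position, giving $\dist(y,z)=\dist(u,v)$. As the pair within $R_i$ was arbitrary, all pairs in a given $R_i$ are at the same distance. The only real subtlety is the diameter bound $D\le d$ that guarantees the truncated vector $\ww$ still detects the distance; once this is in place, the statement is just the direct translation between walk counts and distance.
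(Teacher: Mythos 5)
Your proof is correct and follows essentially the same route as the paper's: both arguments rest on the fact that $(A^\ell)_{yz}$ vanishes for $\ell<\dist(y,z)$ and is nonzero at $\ell=\dist(y,z)$, so equal walk-count vectors force equal distances. The only differences are presentational---the paper argues by direct contradiction on the coordinate $\ell=\dist(u,v)$, while you phrase it as ``distance equals the position of the first nonzero coordinate''---and you additionally make explicit the diameter bound $D\le d$ guaranteeing that the truncated vector $\ww(y,z)$ still detects the distance, a step the paper's proof uses tacitly.
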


\begin{proof}
By contradiction, assume that $(y,z),(u,v)\in R_i$ and that $\dist(y,z)>\dist(u,v)=\ell$. Then, we would have $(A^{\ell})_{uv}\ne 0$ but $(A^{\ell})_{yz}= 0$, against the definition of $R_i$.
\end{proof}

\begin{definition}
\label{ga}
{\rm
A partition $\R=\{R_0,R_1,\ldots,R_r\}$ of $X\times X$ is called {\it walk-regular} if, for each $i$ $(0\le i\le r)$, the pairs $(y,z),(u,v)\in X\times X$ belong to $R_i$ if and only if $\ww(y,z)=\ww(u,v)$. Let $M_i$ $(0\le i\le r)$ denote the $|X|\times|X|$ matrix, indexed by the vertices of $\G$, and defined by
$$
(M_i)_{yz}=\left\{ \begin{array}{ll}
                 1 & \hbox{if } \; (y,z)\in R_i\,\\
                 0 & \hbox{otherwise. } \end{array} \right. \qquad (y,z \in X).
$$
The matrix $M_i$ is called {\it adjacency matrix of the equivalence class $R_i$}.
}\end{definition}

\begin{remark}
\label{gv}
{\rm
Note that we always can permute indices of $\{R_0,R_1,\ldots,R_r\}$ of a walk-regular partition. So, if necessary and using Lemma \ref{gc}, we can define a walk-regular partition by adding the following restriction on $\R$: for any $i\le j$ and $(x,y)\in R_i$, $(u,v)\in R_j$ we have $\dist(x,y)\le\dist(u,v)$.
}
\end{remark}

\begin{lemma}
\label{gd}
Let $\G$ be a graph with vertex set $X$ and a walk-regular partition $\R$ of $X\times X$.
Let $A_i$ $(0\le i\le D)$ denote the distance-$i$ matrix of $\G$, and let $M_i$ $(0\le i\le r)$ denote the adjacency matrices of the corresponding equivalence classes $R_i$. Then there exists an index set $\Phi_i\subset \{0,\ldots,r\}$ such that
$$
A_i=\sum_{j\in\Phi_i} M_j.
$$
\end{lemma}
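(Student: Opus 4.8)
The plan is to derive this directly from Lemma \ref{gc}, which guarantees that within a single equivalence class $R_j$ all pairs of vertices lie at one common distance. First I would, for each $j$ $(0\le j\le r)$, write $\delta(j)$ for the value $\dist(y,z)$ shared by all pairs $(y,z)\in R_j$; this number is well defined precisely by Lemma \ref{gc}. I would then set, for each $i$ $(0\le i\le D)$,
$$
\Phi_i:=\{\,j\mid 0\le j\le r,\ \delta(j)=i\,\}.
$$

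Next I would verify the claimed identity entrywise. Fix $y,z\in X$. Since $\R$ is a partition of $X\times X$, the pair $(y,z)$ belongs to exactly one class, say $R_j$, so $(M_h)_{yz}=\delta_{hj}$ for $0\le h\le r$ and hence $\left(\sum_{h\in\Phi_i}M_h\right)_{yz}=1$ if and only if $j\in\Phi_i$, i.e. if and only if $\delta(j)=i$. On the other hand, $(y,z)\in R_j$ forces $\delta(j)=\dist(y,z)$, so the sum on the right has a $1$ in position $(y,z)$ exactly when $\dist(y,z)=i$, which is precisely the condition for $(A_i)_{yz}=1$. Comparing the two then gives $A_i=\sum_{j\in\Phi_i}M_j$.

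There is essentially no hard step here: the whole content is already packaged in Lemma \ref{gc}, and the remaining work is the bookkeeping observation that a partition refining the distance partition expresses each distance matrix as a sum of the finer blocks. The only point needing a little care is to note that $\Phi_0,\Phi_1,\ldots,\Phi_D$ partition $\{0,1,\ldots,r\}$ (each index $j$ carries the single distance $\delta(j)$, and every value $0\le i\le D$ is attained because $\G$ is connected of diameter $D$); this partition also yields the consistency check $\sum_{i=0}^D A_i=\sum_{j=0}^r M_j=J$.
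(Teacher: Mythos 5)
Your proof is correct and takes exactly the route the paper intends: the paper's own proof of Lemma \ref{gd} is the single line ``Immediate from Lemma \ref{gc}'', and your argument is precisely that immediacy spelled out (grouping classes by their common distance and checking the identity entrywise). Nothing to add.
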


\begin{proof}
Immediate from Lemma \ref{gc}.
\end{proof}

\begin{definition}
\label{gk}{\rm
Let $\G$ denote a graph with vertex set $X$, $d+1$ distinct eigenvalues, and adjacency algebra $\A$. Let $\R=\{R_0,R_1,\ldots,R_r\}$ denote the walk-regular partition of $X\times X$ and let $M_i$ $(0\le i\le r)$ denote the adjacency matrices of the equivalence classes $R_i$ $(0\le i\le r)$. A graph $\G$ is quotient-polynomial if $M_i\in\A$ $(0\le i\le r)$.
}\end{definition}

From this definition and Lemma \ref{gd} it follows that every distance-$i$ matrix of $\G$ belongs to its adjacency algebra $\A$.

\begin{example}
\label{gp}
{\rm
Let $B\otimes C$ denote the Kronecker tensor product of matrices $B$ and $C$ (for the definition and properties of Kronecker tensor product see, for example, \cite[Chapter 13]{AL} or \cite[Chapter 4]{HF}). Let $A$ and $A'$ denote the adjacency matrices of the graphs $\G$ and $\G'$ respectively. The Kronecker product, $\G\otimes\G'$, is that graph with adjacency matrix $A\otimes A'$ (see \cite{PW}).

Let $T_4$ be the triangular graph with  vertex set $X'=\{0,1,2,3,4,5\}$, and edge set $R'=\{01,02,03,12,13,14,24,25,34,35,45\}$ (that is, $T_4$ is the line graph of the complete graph $K_4$). The distinct eigenvalues of $T_4$ are $\{-2,0,4\}$, and the distinct eigenvalues of the complete graph $K_2$ are $\{-1,1\}$. Consider the graph $\G=K_2\otimes T_4$.
From \cite[Theorem~13.12]{AL}, the distinct eigenvalues of $\G$ are $\{-4,-2,0,2,4\}$, and from \cite[Theorem~1]{PW}, $\G$ is connected.
Moreover, $\G$ is a quotient-polynomial graph. The adjacency algebra of $\G$ is closed with respect to the Hadamard product, and has the standard basis $\{F_0,F_1,F_2,F_3,F_4\}$, where $F_i:=p_i(A)$ $(0\le i\le 4)$ and
$$
p_0(t)=1,\qquad
p_1(t)=t,\qquad
p_2(t)=-\frac{t^4}{32}+\frac{5t^2}{8}-1,
$$
$$
p_3(t)=\frac{t^4}{16}-\frac{3 t^2}{4},\qquad
p_4(t)=\frac{t^3}{8}-\frac{3t}{2}.
$$
For the corresponding intersection diagram of $\G$ see Figure \ref{gq}.
}\end{example}

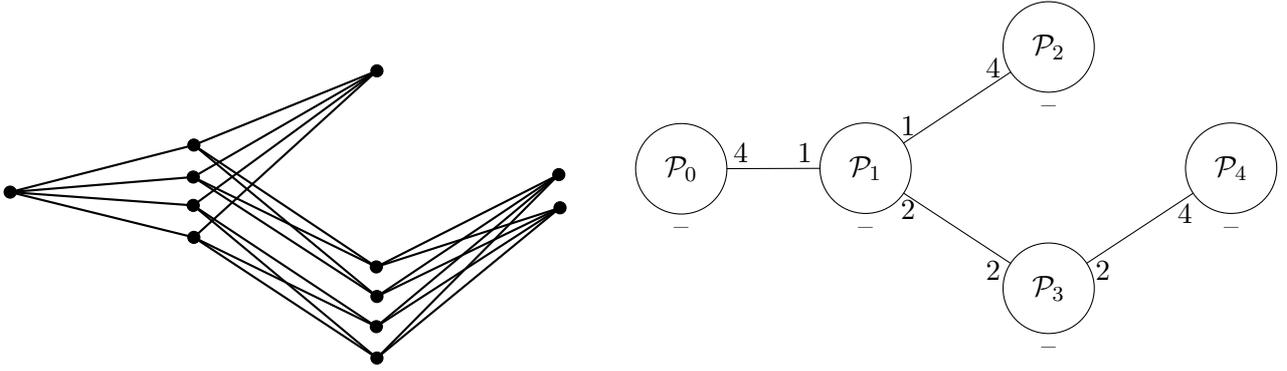
\begin{figure}
{\small
\begin{center}
\begin{tikzpicture}[scale=.4]
\draw [line width=.8pt] (5.98,-2.51)-- (12.02,-0.55);
\draw [line width=.8pt] (5.98,-2.51)-- (-0.02,1.53);
\draw [line width=.8pt] (5.98,-2.51)-- (-0.04,0.47);
\draw [line width=.8pt] (5.98,-2.51)-- (11.98,0.55);
\draw [line width=.8pt] (6,3.99)-- (-0.04,-0.47);
\draw [line width=.8pt] (6,3.99)-- (-0.02,1.53);
\draw [line width=.8pt] (6,3.99)-- (-0.04,0.47);
\draw [line width=.8pt] (6,3.99)-- (-0.02,-1.53);
\draw [line width=.8pt] (6,-5.53)-- (-0.04,-0.47);
\draw [line width=.8pt] (6,-5.53)-- (12.02,-0.55);
\draw [line width=.8pt] (6,-5.53)-- (-0.02,-1.53);
\draw [line width=.8pt] (6,-5.53)-- (11.98,0.55);
\draw [line width=.8pt] (5.98,-4.49)-- (-0.04,-0.47);
\draw [line width=.8pt] (5.98,-4.49)-- (12.02,-0.55);
\draw [line width=.8pt] (5.98,-4.49)-- (-0.02,-1.53);
\draw [line width=.8pt] (5.98,-4.49)-- (11.98,0.55);
\draw [line width=.8pt] (6,-3.49)-- (12.02,-0.55);
\draw [line width=.8pt] (6,-3.49)-- (-0.02,1.53);
\draw [line width=.8pt] (6,-3.49)-- (-0.04,0.47);
\draw [line width=.8pt] (6,-3.49)-- (11.98,0.55);
\draw [line width=.8pt] (-6.06,-0.03)-- (-0.04,-0.47);
\draw [line width=.8pt] (-6.06,-0.03)-- (-0.02,1.53);
\draw [line width=.8pt] (-6.06,-0.03)-- (-0.04,0.47);
\draw [line width=.8pt] (-6.06,-0.03)-- (-0.02,-1.53);
\draw [fill=black] (5.98,-2.51) circle [radius=0.2];
\draw [fill=black] (6,3.99) circle [radius=0.2];
\draw [fill=black] (6,-5.53) circle [radius=0.2];
\draw [fill=black] (5.98,-4.49) circle [radius=0.2];
\draw [fill=black] (6,-3.49) circle [radius=0.2];
\draw [fill=black] (-6.06,-0.03) circle [radius=0.2];
\draw [fill=black] (-0.04,-0.47) circle [radius=0.2];
\draw [fill=black] (12.02,-0.55) circle [radius=0.2];
\draw [fill=black] (-0.02,1.53) circle [radius=0.2];
\draw [fill=black] (-0.04,0.47) circle [radius=0.2];
\draw [fill=black] (-0.02,-1.53) circle [radius=0.2];
\draw [fill=black] (11.98,0.55) circle [radius=0.2];
\end{tikzpicture}
\qquad
\begin{tikzpicture}[scale=.4]
\draw (-6.06,-0.03)-- (0,-0.01);
\draw (6.02,4.01)-- (0,-0.01);
\draw (6.02,-3.99)-- (0,-0.01);
\draw (6.02,-3.99)-- (12.02,-0.01);
\draw [fill=white] (-6.06,-0.03) circle (1.5cm);
\draw [fill=white] (0,-0.01) circle (1.5cm);
\draw [fill=white] (6.02,4.01) circle (1.5cm);
\draw [fill=white] (6.02,-3.99) circle (1.5cm);
\draw [fill=white] (12.02,-0.01) circle (1.5cm);
\node at (-6.06,-0.03) {$\P_0$};
\node at (0,-0.01) {$\P_1$};
\node at (6.02,4.01) {$\P_2$};
\node at (6.02,-3.99) {$\P_3$};
\node at (12.02,-0.01) {$\P_4$};
\node at (-6.06,-2.03) {--};
\node at (0,-2.01) {--};
\node at (6.02,2.01) {--};
\node at (6.02,-5.99) {--};
\node at (12.02,-2.01) {--};
\node at (-4.1,0.5) {$4$};
\node at (-2,0.5) {$1$};
\node at (1.4,1.4) {$1$};
\node at (1.4,-1.4) {$2$};
\node at (4.2,3.3) {$4$};
\node at (4.2,-3.4) {$2$};
\node at (7.8,-3.4) {$2$};
\node at (10.5,-1.5) {$4$};
\end{tikzpicture}
\caption{The quotient-polynomial graph $\G:=K_2\otimes T_4$ and its intersection diagram. The adjacency algebra of $\G$ is closed with respect to the Hadamard product. If $\{F_0,F_1,F_2,F_3,F_4\}$ is the standard basis from Remark \ref{gp}, then for a fixed vertex $x$ of $\G$ we have $\P_i=\{z \mid (F_i)_{xz}=1\}$ $(0\le i\le d)$.}
\label{gq}
\end{center}
}\end{figure}

\begin{definition}
\label{gl}{\rm
Let $\G$ denote a graph with $d+1$ distinct eigenvalues. Given a walk-regular partition $\R=\{R_0,R_1,\ldots,R_r\}$ of $X\times X$, let $w_{ij}$ be the common value of the number of $i$-walks $(0\le i\le d)$ from $y$ to $z$ for any $y,z\in R_j$ $(0\le j\le r)$. Define the matrices $W$ and $Z$, and the polynomials $p_i(t)$ $(0\le i\le d)$ as follows:
$$
[W |\tt]=
\renewcommand\arraystretch{1.3}
\mleft[
\begin{array}{cccc|c}
w_{00} & w_{01} & \ldots & w_{0r} & 1\\
w_{10} & w_{11} & \ldots & w_{1r} & t\\
w_{20} & w_{21} & \ldots & w_{2r} & t^2\\
\vdots & \vdots & \, & \vdots & \vdots\\
w_{d0} & w_{d1} & \ldots & w_{dr}  & t^d
\end{array}
\mright]
\stackrel{\rm row}{\sim}
\mleft[
\begin{array}{ccccccccc|c}
1 & * & 0 & 0 & \hdots&0& *&\hdots&*& p_1(t)\\
0 & 0 & 1 & 0 & \hdots&0& *&\hdots&*& p_1(t)\\
0 & 0 & 0 & 1 & \hdots&0& *&\hdots&*& p_2(t)\\
\vdots & \vdots & \vdots & \vdots & ~& \vdots& \vdots & ~ & \vdots & \vdots\\
0 & 0 & 0 & 0 & \hdots&1& *&\hdots&* & p_d(t)
\end{array}
\mright]=
[Z |\pp(t)],
$$
that is, the matrix $[Z|\pp(t)]$ is the reduced row-echelon form of $[W|\tt]$.
}\end{definition}

\begin{theorem}
\label{gf}
Let $\G$ be a graph with vertex set $X$, $d+1$ distinct eigenvalues, and let $\R=\{R_0,R_1,\ldots,R_r\}$ denote a walk-regular partition of $X\times X$. Then,
$$
d\le r.
$$
Furthermore, let $Z$ denote the matrix of Definition \ref{gl}, and define
$
\W:=\left\{\ww(y,z) \mid y,z\in X \right\}$. Then the following are equivalent.
\begin{enumerate}[label=\rm(\roman*)]
\item
$d=r$.
\item
$Z=I$.
\item
$|\W|=d+1$.
\item
$\W$ is a linearly independent set.
\item
$\G$ is a quotient-polynomial graph.
\end{enumerate}
\end{theorem}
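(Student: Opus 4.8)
The plan is to reduce every one of the five items to the single numerical identity $r=d$, using as the central object the $(d+1)\times(r+1)$ matrix $W$ of Definition \ref{gl}, read in two complementary ways. First I would record the structural fact underlying everything: because $\R$ is walk-regular, each entry $(A^\ell)_{yz}$ depends only on the class $R_j$ containing $(y,z)$, so that
\[
A^\ell=\sum_{j=0}^r w_{\ell j}\,M_j \qquad (0\le \ell\le d),
\]
where the $M_j$ are the class matrices of Definition \ref{ga}. Since the $M_j$ have pairwise disjoint nonzero supports they are linearly independent, so $S:=\span\{M_0,\ldots,M_r\}$ has dimension $r+1$, and the displayed identities give $\A\subseteq S$. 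As $\G$ has $d+1$ distinct eigenvalues, the minimal polynomial of $A$ has degree $d+1$, so $\{I,A,\ldots,A^d\}$ is linearly independent and $\dim\A=d+1$; hence $d+1\le r+1$, that is, $d\le r$.

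Next I read $W$ in two ways. Its $\ell$-th row $(w_{\ell 0},\ldots,w_{\ell r})$ is exactly the coordinate vector of $A^\ell$ in the basis $\{M_j\}$ of $S$; since the $A^\ell$ are linearly independent, the rows of $W$ are independent and $\rank(W)=d+1$ (full row rank). On the other hand, its $j$-th column is precisely $\ww(y,z)$ for any $(y,z)\in R_j$, so the columns of $W$ are the elements of $\W$, and by the defining property of a walk-regular partition these $r+1$ columns are pairwise distinct; hence $|\W|=r+1$, and $\span\W$, being the column space of $W$, has dimension $\rank(W)=d+1$.

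The equivalences now fall out as one-line dimension counts. Because $|\W|=r+1$ always, item (iii) $|\W|=d+1$ holds iff $r=d$, giving (i)$\Leftrightarrow$(iii). The set $\W$ consists of $r+1$ vectors spanning a space of dimension $d+1$, so it is linearly independent iff $r+1=d+1$, giving (i)$\Leftrightarrow$(iv). For (i)$\Leftrightarrow$(ii): if $r=d$ then $W$ is square of full rank $d+1$, hence invertible, so its reduced row-echelon form $Z$ equals the identity; conversely $Z=I$ forces $Z$ to be square, so $r=d$. Finally, $\G$ is quotient-polynomial iff $M_i\in\A$ for all $i$, i.e. $S\subseteq\A$; combined with $\A\subseteq S$ this reads $\A=S$, which, both spaces being finite-dimensional with $\A\subseteq S$, is equivalent to $\dim\A=\dim S$, i.e. $d=r$. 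This gives (i)$\Leftrightarrow$(v) and closes the cycle.

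I expect the only delicate point to be the bookkeeping that identifies $W$ \emph{simultaneously} as the matrix whose rows are the coordinates of the powers $A^\ell$ and whose columns are the walk-vectors $\ww$; once that dual reading is pinned down, the single computation $\rank(W)=d+1=\dim\span\W$ does all the work. The remaining cares are minor: arguing $\dim\A=d+1$ from the $d+1$ distinct eigenvalues, and checking that the $M_j$ are genuinely nonzero with disjoint supports so that $\dim S=r+1$.
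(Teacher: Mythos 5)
Your proof is correct and follows essentially the same route as the paper's: the containment $\A\subseteq\span\{M_0,\ldots,M_r\}$ giving $d\le r$, the dual reading of $W$ (rows as coordinate vectors of the powers $A^\ell$, columns as the walk-vectors $\ww(y,z)$), and the rank count $\rank(W)=d+1$ together with $|\W|=r+1$. The only difference is organizational: you reduce each of (ii)--(v) to (i) by clean dimension counts, whereas the paper establishes a chain of implications, but the key objects and lemmas are identical.
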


\begin{proof}
Let $M_j$ denote the adjacency matrix of the equivalent-class $R_j$ $(0\le j\le r)$. Since $\R$ is a walk-regular partition, for the scalars $w_{ij}$ $(0\le i\le d,~0\le j\le r)$ of Definition \ref{gl}, we have
\begin{eqnarray*}
 I & = & w_{00} M_{0} + w_{01} M_1 + \cdots + w_{0r} M_r,\\
 A & = & w_{10} M_{0} + w_{11} M_1 + \cdots + w_{1r} M_r,\\
 A^2 & = & w_{20} M_{0} + w_{21} M_1 + \cdots + w_{2r} M_r,\\
\,& \, & \vdots\\
A^d & = & w_{d0} M_{0} + w_{d1} M_1 + \cdots + w_{dr} M_r.
\end{eqnarray*}
This yields $\span\{I,A,\ldots,A^d\}\subseteq \span\{M_0,M_1,\ldots,M_r\}$ as vector spaces, and hence $d\le r$.

Let $W$ denote the matrix from Definition \ref{gl}. Note that the elements of the set $\W$ are columns of the matrix $W$, and since $\R$ is a walk-regular partition, $\W$ has exactly $r+1$ elements.

Also note that
\begin{equation}
\label{gu}
\rank(W)\ge d+1.
\end{equation}
Otherwise, if $\rank(W)<d+1$, applying elementary row operations on the above system, we get $A^d\in\span\{I,A,\ldots,A^{d-1}\}$, a contradiction.

To prove equivalences between (i)--(v), we show the following chain of implications.

\medskip
(i) $\Rightarrow$ (ii), (v).
If $d=r$ then $\rank(W)= d+1 = r+1$, which means that $Z=I$ and for every $M_i$ we have $M_i=p_i(A)$. This yields $M_i\in\A$, and $\G$ is a quotient-polynomial graph.

\smallskip
(ii) $\Rightarrow$ (i), (iii), (iv). If $Z=I$, since $Z$ is a $(d+1)\times(r+1)$ matrix, we have $r=d$. Moreover, we also have that $\rank(W)=d+1$. This yield $|\W|=d+1$ and $\W$ is a linearly independent set.

\smallskip
(iii) $\Rightarrow$ (i), (iv).
If $|\W|=d+1$ then $d=r$ (since $\W$ has $r+1$ elements). If $\W$ is a linearly dependent set, then $\rank(W)<d+1$, which is a contradiction with \eqref{gu}.

\smallskip
(iv) $\Rightarrow$ (i).
If $\W$ is a linearly independent set, then $\rank(W)\ge r+1$. On the other hand, since $d\le r$, and $W$ is $(d+1)\times(r+1)$ matrix, we have $\rank(W)\le d+1$. This yield $d=r$.

\smallskip
(v) $\Rightarrow$ (i).
If $\G$ is a quotient-polynomial graph then $M_i\in\A$ $(0\le i\le r)$. Then as vector spaces $\span\{M_0,M_1,\ldots,M_r\}\subseteq\span\{I,A,\ldots,A^d\}$, which yield $r\le d$. On the other hand, since $d\le r$, the result follows.
\end{proof}

\begin{corollary}
\label{gs}
Let $\G$ denote a graph with $d+1$ distinct eigenvalues, and $x$-distance-faithful intersection diagram $\pi$ with $r+1$ cells. If $\G$ has the same $x$-distance-faithful intersection diagram around every vertex $x$, then $\G$ has at most $r+1$ eigenvalues. Moreover, if $r=d$ then $\G$ is a quotient-polynomial graph.
\end{corollary}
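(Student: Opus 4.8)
The plan is to deduce this statement directly from Theorem \ref{1f}, of which the corollary is essentially a reformulation. First I would observe that the hypotheses coincide: the assumption that $\G$ carries an $x$-distance-faithful intersection diagram $\pi$ with $r+1$ cells which is identical around every vertex $x\in X$ is exactly the hypothesis of Theorem \ref{1f}. In particular, writing $\pi=\{\P_0,\P_1,\ldots,\P_r\}$ with $\P_0=\{x\}$, the walk counts $w_{ij}$ (the number of $i$-walks from any $y\in\P_j$ to $x$) are well defined precisely because the diagram is equitable and the same at every vertex, so the matrix $P=[w_{ij}]_{0\le i,j\le r}$ is a genuine $(r+1)\times(r+1)$ matrix.

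Next I would invoke Theorem \ref{1f}, which asserts that $\G$ has exactly $\rank(P)$ distinct eigenvalues. Since $\G$ is assumed to have $d+1$ distinct eigenvalues, this yields $d+1=\rank(P)$. For the first assertion it then suffices to bound the rank: as $P$ is a square matrix of order $r+1$, we have $\rank(P)\le r+1$, and therefore $d+1=\rank(P)\le r+1$. Thus $\G$ has at most $r+1$ distinct eigenvalues, as claimed.

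For the \textbf{moreover} part, I would exploit the equality forced by the additional hypothesis $r=d$. Combining $d+1=\rank(P)$ with the bound $\rank(P)\le r+1=d+1$ gives $\rank(P)=r+1$. The second clause of Theorem \ref{1f} states exactly that $\rank(P)=r+1$ implies $\G$ is quotient-polynomial, so the conclusion follows immediately. There is no genuine obstacle in this argument, as all the substance is already packaged inside Theorem \ref{1f}; the only point requiring care is the elementary bookkeeping relating the three quantities $d+1$, $\rank(P)$, and $r+1$, together with the verification that the hypotheses of the corollary really do match those of Theorem \ref{1f}.
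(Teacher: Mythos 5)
Your proof is correct, but it takes a different immediate route than the paper. The paper derives Corollary \ref{gs} directly from Theorem \ref{gf}: it observes that having the same $x$-distance-faithful intersection diagram around every vertex corresponds to a walk-regular partition of $X\times X$ with $r+1$ classes, whence Theorem \ref{gf} gives $d\le r$ (the eigenvalue bound) and the equivalence (i)$\Leftrightarrow$(v) in that theorem gives the quotient-polynomial conclusion when $r=d$. You instead invoke Theorem \ref{1f} and do the bookkeeping $d+1=\rank(P)\le r+1$, with equality forcing $\rank(P)=r+1$ and hence quotient-polynomiality. Your route is logically sound and non-circular: although in the paper the corollary is stated and proved \emph{before} the proof of Theorem \ref{1f}, that proof depends only on Theorem \ref{gf} and not on the corollary, so the dependency can be inverted as you do. What the two approaches buy: the paper's argument is more primitive and exposes the real mechanism (the walk-regular partition), treating the corollary and Theorem \ref{1f} as parallel consequences of Theorem \ref{gf}; yours is shorter and treats the corollary as a reformulation of Theorem \ref{1f}, but it presupposes that theorem as available machinery and obscures the fact that both statements are really the same observation about the rank of the walk-count matrix attached to a walk-regular partition. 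Either way, the substance is the same, since Theorem \ref{1f} is itself proved from Theorem \ref{gf}.
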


\begin{proof}
The same intersection diagram around every vertex corresponds to a walk-regular partition of $X\times X$ with $r+1$ cells. The result now follows from Theorem \ref{gf}.
\end{proof}

Considering the proof of Theorem \ref{gf}, the number of distinct eigenvalues of $A^i$ $(0\le i\le d)$ is important in deciding when $\G$ is not a quotient-polynomial graph.

\begin{corollary}
\label{gs2}
Let $\G$ denote a graph with vertex set $X$ and $d+1$ distinct eigenvalues. If, for $i\in \{0,\ldots,d\}$, the matrix $A^i$ has more than $d+1$ distinct eigenvalues, then $\G$ is not a quotient-polynomial graph.
\end{corollary}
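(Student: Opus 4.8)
The plan is to read the hypothesis literally, in terms of eigenvalues, and to observe that for a graph with $d+1$ distinct eigenvalues every power $A^i$ automatically carries at most $d+1$ distinct eigenvalues; thus the antecedent can never occur and the implication holds (vacuously). I will then indicate where the genuinely non-vacuous content sits, namely in the distinct \emph{entries} of $A^i$ and in Theorem~\ref{gf}.

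First I would recall the spectral data from Subsection~\ref{2f}. Let $\lambda_0,\lambda_1,\ldots,\lambda_d$ be the distinct eigenvalues of $A$ and $E_0,E_1,\ldots,E_d$ the associated primitive idempotents. By property~(e-ix) we have $A^i=\sum_{j=0}^d \lambda_j^i E_j$ for every $i\in\{0,\ldots,d\}$. Since the $E_j$ are nonzero, pairwise orthogonal idempotents (e-v) summing to $I$ (e-vii), this is exactly the spectral decomposition of $A^i$: the eigenvalues of $A^i$ are the scalars $\lambda_0^i,\lambda_1^i,\ldots,\lambda_d^i$, with $\lambda_j^i$ attained on the eigenspace $\im(E_j)$.

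Consequently the number of distinct eigenvalues of $A^i$ equals the number of distinct values among $\{\lambda_0^i,\ldots,\lambda_d^i\}$, which is at most $d+1$. This bound holds for any graph with $d+1$ distinct eigenvalues, irrespective of whether it is quotient-polynomial; in particular no $A^i$ can ever possess more than $d+1$ distinct eigenvalues. Hence the hypothesis of the corollary is unsatisfiable and the stated implication follows immediately. Equivalently, one may argue by contraposition: whatever $\G$ is, $A^i\in\A=\span\{I,A,\ldots,A^d\}$ is a polynomial $p(A)=\sum_{j=0}^d p(\lambda_j)E_j$, so it has at most $d+1$ distinct eigenvalues, which is never $>d+1$.

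The point I would flag is that, read in terms of eigenvalues, there is in fact no obstacle at all: the statement is vacuously true. The substantive companion result---matching the remark following Theorem~\ref{1d} about the distinct \emph{entries} of $A^d$---concerns the number of distinct entries of $A^i$ rather than eigenvalues, and there the argument is non-trivial. Each entry $(A^i)_{yz}$ is the $i$-th coordinate of the walk vector $\ww(y,z)$ of Definition~\ref{gb}, so two distinct entries force two distinct vectors $\ww(y,z)$; if $A^i$ had more than $d+1$ distinct entries then the walk-regular partition $\R=\{R_0,\ldots,R_r\}$ would satisfy $r+1>d+1$, i.e.\ $r>d$, whence $\G$ is not quotient-polynomial by Theorem~\ref{gf}. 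I would therefore present the eigenvalue statement via the vacuity argument above and record the entry version as the intended non-vacuous counterpart.
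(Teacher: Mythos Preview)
Your argument is correct: since $A^i\in\A=\span\{I,A,\ldots,A^d\}$ for any graph with $d+1$ distinct eigenvalues, the spectral decomposition (e-ix) forces $A^i$ to have at most $d+1$ distinct eigenvalues, so the hypothesis is empty and the implication holds vacuously. You have also correctly located the non-vacuous companion statement about distinct \emph{entries} and given the right proof for it via Theorem~\ref{gf}.

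The paper's proof proceeds differently. It argues by contraposition: if $\G$ were quotient-polynomial, $\A$ would have a standard basis $\{F_0,\ldots,F_d\}$ of $\circ$-idempotent $(0,1)$-matrices, and $A^i$ would be a linear combination $\sum_j\alpha_jF_j$; such a combination cannot have more than $d+1$ distinct eigenvalues (nor more than $d+1$ distinct entries), contradicting the hypothesis. This argument is valid, but it does not observe that the eigenvalue hypothesis is unsatisfiable regardless of quotient-polynomiality. Your approach is sharper in that it exposes the vacuity directly and isolates the ``entries'' reading---which is the one that actually carries content, and which reappears verbatim in the corollary following Theorem~\ref{1d}. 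The paper's phrasing is almost certainly a slip for ``entries''; the sentence introducing the corollary refers to the proof of Theorem~\ref{gf}, where it is the walk vectors (i.e.\ entries of the powers) that drive the count, not their spectra.
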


\begin{proof}
Under the hypothesis, $A^i$ cannot be written as a linear combination of some $d+1$ $\circ$-idempotent $(0,1)$-matrices in $\{F_0,\ldots, F_d\}$ and, hence, $\A$  does not have a standard basis.
\end{proof}

\begin{comment}
\label{gr}{\rm
If $\G$ is a quotient-polynomial graph then the polynomials $p_i$ $(0\le i\le r)$ from Definition \ref{gl} are orthogonal with respect to the scalar product \eqref{inner-prod2}, as happens with the distance polynomials of a distance-regular graph.
Indeed, for every $i,j$ $(0\le i,j\le d)$, we have
$$
\langle p_i,p_j\rangle_{\G}= \langle p_i(A),p_j(A) \rangle_{\G}= \langle M_i,M_j \rangle_{\G}
=\frac{1}{|X|}\sum\limits_{u,v\in X}(M_i\circ \ol{M_j})_{uv}
=0.
$$
Also, for the same polynomials $p_i$ $(0\le i\le r)$, we have that $\G$ is a regular and connected graph if and only if $\sum_{i=0}^r p_i(A)=J$.
}\end{comment}

\medskip
\noindent
{\bf Proof of Theorem \ref{1f}.} (If $\G$ has the same $x$-distance-faithful intersection diagram with $r$ cells around every vertex, then $\G$ has exactly $\rank(P)$ distinct eigenvalues, where $P=(w_{ij})_{(r+1)\times (r+1)}$. If $\rank(P)=r+1$ then $\G$ is a quotient-polynomial graph.)

Using the intersection diagram $\pi_x=\{\P_0,\P_1,\ldots,\P_r\}$ around $x$, we can consider the column vectors
\begin{equation}
\label{gt}
\ww_0=\left(\begin{matrix}
w_{00}\\w_{10}\\w_{20}\\ \vdots\\w_{r0}\\
\end{matrix}\right),
\ww_1=\left(\begin{matrix}
w_{01}\\w_{11}\\w_{21}\\ \vdots\\w_{r1}\\
\end{matrix}\right),
\ldots,
\ww_r=\left(\begin{matrix}
w_{0r}\\w_{1r}\\w_{2r}\\ \vdots\\w_{rr}\\
\end{matrix}\right),
\end{equation}
where $w_{ij}$ denote the number of $i$-walks $(0\le i\le r)$ from $z$ to $x$ for any $z\in\P_j$ $(0\le j\le r)$. Note that we do not know is it $\ww_i\ne\ww_j$ for every $0\le i,j\le r$. Now, pick a vertex $u\in X$ $(u\ne x)$, consider the intersection diagram $\pi_u=\{\P_0(u),\P_1(u),\ldots,\P_r(u)\}$, and let $\ww'_{ij}(u,v)$ denote the number of $i$-walks $(0\le i\le r)$ from $v$ to $u$ for any $v\in\P_j(u)$ $(0\le j\le r)$. Then, since $\G$ has the same intersection diagram around every vertex, the set of vectors
$$
\ww'_0(u,v),
\ww'_1(u,v),
\ldots,
\ww'_r(u,v),
$$
is the same  as in \eqref{gt}. That is, for every $i$ $(0\le i\le r)$ there exists exactly one $h$ $(0\le h\le r)$ such that       $\ww_i=\ww'_h(u,v)$. Now we can define the matrices $M_i\in\Mat_{(r+1)\times(r+1)}(\CC)$ in the following way:
$$
(M_i)_{zy} =
\left\{ \begin{array}{ll}
1 & \hbox{if } \; \ww'_h(z,y)=\ww_i \hbox{ for some } h, \\
0 & \hbox{otherwise } \;
\end{array} \right.
\qquad (z,y \in X).
$$
This definition of $M_i$ yields that
\begin{equation}
\label{gw}
A^i=w_{i0}M_0+w_{i1}M_1+\cdots+w_{ir}M_r
\qquad(0\le i\le r).
\end{equation}
Also, since $\G$ has the same distance-faithful intersection diagram around every vertex, using this intersection diagram we can construct a walk-regular partition of $X\times X$ with $r+1$ basis relations $R_i$. So, by Theorem \ref{gf}, $d\le r$. By assumptions
$$
P=
\mleft[
\begin{array}{cccc}
w_{00} & w_{01} & \ldots & w_{0r}\\
w_{10} & w_{11} & \ldots & w_{1r}\\
w_{20} & w_{21} & \ldots & w_{2r}\\
\vdots & \vdots & \, & \vdots\\
w_{r0} & w_{r1} & \ldots & w_{rr}
\end{array}
\mright].
$$
Now using \eqref{gw} and the fact that $\dim(\A)=d+1$, it follows $\rank(P)=d+1$. If $\rank(P)=r+1$ the result follows from Theorem \ref{gf}.
\hfill$\hbox{\rule{3pt}{6pt}}$
\medskip


\subsection{Algorithmic approach for deciding if $A_i$ is polynomial in $A$}
\label{gn}

In this subsection we give an algorithm which, for a given graph $\G$, decides whether  $A_i$ $(0\le i\le D)$  is a polynomial (not necessarily of degree $i$)  in $A$ or not. If the answer is in the affirmative, the algorithm also compute that polynomial. Note that this procedure can be seen as a refinement of Algorithm \ref{3d2}, since allows to decide if  $\G$ is distance-polynomial ($A_i\in \A$ for every $i=0,\ldots,D$).


\begin{algorithm}
\label{gi}
{\rm
Let $A$ denote the adjacency matrix of $\G$ with $d+1$ distinct eigenvalues and diameter $D$. Considering only the matrix $Z$ (from Definition \ref{gl}) we can determine which distance-$i$ matrix is a polynomial  in $A$ (see Example \ref{gg}).

\medskip
\noindent
{\bf Input:} The adjacency matrix $A$ of $\G$, or intersection diagrams around every vertex.\\
\noindent
{\bf Output:}  A polynomial $p_i$ such that $A_i=p_i(A)$ (if such a polynomial exists).
\begin{itemize}
\item[{\bf 1.}]
Using the adjacency matrix $A$ of $\G$ (or using intersection diagrams around every vertex), compute the vectors $\ww(y,z)$ for every $y,z\in X$ (see Definition \ref{gb} and Remark \ref{gm}).
\item[{\bf 2.}]
Find the matrices $[W\mid\tt]$, $[Z\mid\pp(t)]$, and the polynomials $p_i(t)$ $(0\le i\le d)$ from Definition~\ref{gl}.
\item[{\bf 3.}]
The columns of the matrices $W$ and $Z$ are indexed by the sets $\{R_0,R_1,\ldots,R_r\}$ (where $\R=\{R_0,R_1,\ldots,R_r\}$ is the walk-regular partition of $X\times X$). Let $R_{i_1},R_{i_2},\ldots,R_{i_k}$ denote the equivalence classes for which all pair of vertices in any $R_{i_h}$ $(0\le h\le k)$ are at the same distance. These relations represent the columns $i_h$ $(0\le h\le k)$ in $[W|\tt]$ and $[Z| \pp(t)]$. Let $p_{j_1},p_{j_2},\ldots,p_{j_m}$ denote the polynomials which have nonzero entry in the columns ${i_h}$ $(0\le h\le k)$ of $Z$.
\item[{\bf 4.}]
If the sum of the rows ${j_1},{j_2}, \ldots, {j_m}$ of $Z$ is a $(0,1)$-row vector for which the nonzero entry is only in columns $R_{i_1}, R_{i_2}, \ldots, R_{i_k}$, and vice versa, then the adjacency matrix $A_i$ is polynomial in $A$, and we have $A_i=p_{j_1}(A)+p_{j_2}(A)+\ldots+p_{j_m}(A)$. Otherwise, $A_i$ is not polynomial in $A$.
\end{itemize}
}\end{algorithm}

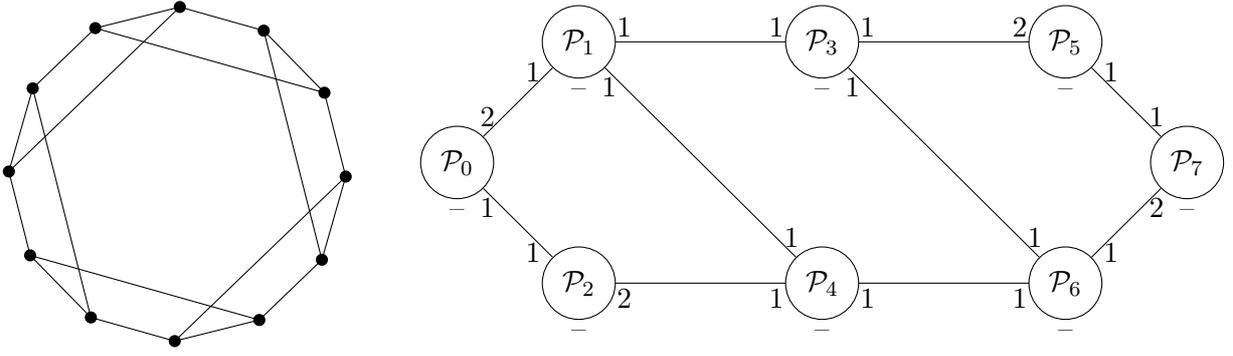
\begin{figure}[!ht]
{
\small
\begin{center}
\begin{tikzpicture}[scale=.4]
\fill (5.437550622520739,3.686217782649107) circle [radius=0.2];
\fill (6.221332839871632,6.4437684051698465) circle [radius=0.2];
\fill (5.521332839871632,9.223768405169846) circle [radius=0.2];
\fill (3.5251150572225267,11.281319027690586) circle [radius=0.2];
\fill (0.7675644347017866,12.065101245041479) circle [radius=0.2];
\fill (-2.0124355652982127,11.36510124504148) circle [radius=0.2];
\fill (-4.069986187818953,9.368883462392375) circle [radius=0.2];
\fill (-4.853768405169847,6.611332839871634) circle [radius=0.2];
\fill (-4.153768405169848,3.831332839871636) circle [radius=0.2];
\fill (-2.157550622520742,1.7737822173508944) circle [radius=0.2];
\fill (0.6,0.99) circle [radius=0.2];
\fill (3.38,1.69) circle [radius=0.2];
\draw  (-2.0124355652982127,11.36510124504148)-- (5.521332839871632,9.223768405169846);
\draw  (3.5251150572225267,11.281319027690586)-- (5.437550622520739,3.686217782649107);
\draw  (0.7675644347017866,12.065101245041479)-- (-4.853768405169847,6.611332839871634);
\draw  (-4.069986187818953,9.368883462392375)-- (-2.157550622520742,1.7737822173508944);
\draw  (6.221332839871632,6.4437684051698465)-- (0.6,0.99);
\draw  (3.38,1.69)-- (-4.153768405169848,3.831332839871636);
\draw  (-2.0124355652982127,11.36510124504148)-- (-4.069986187818953,9.368883462392375);
\draw  (-4.069986187818953,9.368883462392375)-- (-4.853768405169847,6.611332839871634);
\draw  (-4.853768405169847,6.611332839871634)-- (-4.153768405169848,3.831332839871636);
\draw  (-4.153768405169848,3.831332839871636)-- (-2.157550622520742,1.7737822173508944);
\draw  (-2.157550622520742,1.7737822173508944)-- (0.6,0.99);
\draw  (0.6,0.99)-- (3.38,1.69);
\draw  (3.38,1.69)-- (5.437550622520739,3.686217782649107);
\draw  (5.437550622520739,3.686217782649107)-- (6.221332839871632,6.4437684051698465);
\draw  (6.221332839871632,6.4437684051698465)-- (5.521332839871632,9.223768405169846);
\draw  (5.521332839871632,9.223768405169846)-- (3.5251150572225267,11.281319027690586);
\draw  (3.5251150572225267,11.281319027690586)-- (0.7675644347017866,12.065101245041479);
\draw  (0.7675644347017866,12.065101245041479)-- (-2.0124355652982127,11.36510124504148);
\end{tikzpicture}
\qquad
\begin{tikzpicture}[scale=.8]
\draw (1,3) circle [radius=0.6];
\node  at (1,3) {{\small$\P_0$}};
\node  at (1.5,3.75) {$2$};
\node  at (1.5,2.25) {$1$};
\node  at (1,2.2) {{\small --}};

\draw (3,1) circle [radius=0.6];
\node  at (3,1) {{\small$\P_2$}};
\node  at (2.25,1.5) {$1$};
\node  at (3.75,0.75) {$2$};
\node  at (3,0.2) {{\small --}};

\draw (3,5) circle [radius=0.6];
\node  at (3,5) {{\small$\P_1$}};
\node  at (2.25,4.5) {$1$};
\node  at (3.5,4.25) {$1$};
\node  at (3.75,5.25) {$1$};
\node  at (3,4.2) {{\small --}};

\draw (7,1) circle [radius=0.6];
\node  at (7,1) {\small$\P_4$};
\node  at (6.25,0.75) {$1$};
\node  at (6.5,1.75) {$1$};
\node  at (7.75,0.75) {$1$};
\node  at (7,0.2) {{\small --}};

\draw (7,5) circle [radius=0.6];
\node  at (7,5) {{\small$\P_3$}};
\node  at (6.25,5.25) {$1$};
\node  at (7.75,5.25) {$1$};
\node  at (7.5,4.25) {$1$};
\node  at (7,4.2) {{\small --}};

\draw (11,1) circle [radius=0.6];
\node  at (11,1) {{\small$\P_6$}};
\node  at (10.25,0.75) {$1$};
\node  at (10.5,1.75) {$1$};
\node  at (11.75,1.5) {$1$};
\node  at (11,0.2) {{\small --}};

\draw (11,5) circle [radius=0.6];
\node  at (11,5) {\small$\P_5$};
\node  at (10.25,5.25) {$2$};
\node  at (11.75,4.5) {$1$};
\node  at (11,4.2) {{\small --}};

\draw (13,3) circle [radius=0.6];
\node  at (13,3) {\small$\P_7$};
\node  at (12.5,3.75) {$1$};
\node  at (12.5,2.25) {$2$};
\node  at (13,2.2) {{\small --}};
\draw (1.43,2.57)--(2.57,1.43);
\draw (1.43,3.43)--(2.57,4.57);

\draw (3.6,1)--(6.4,1);
\draw (3.6,5)--(6.4,5);
\draw (3.43,4.57)--(6.57,1.43);

\draw (7.6,1)--(10.4,1);
\draw (7.43,4.57)--(10.57,1.43);
\draw (7.6,5)--(10.4,5);

\draw (11.43,1.43)--(12.57,2.57);
\draw (11.43,4.57)--(12.57,3.43);
\end{tikzpicture}
\caption{`Chordal ring' $(12,4)$ and its intersection diagram. This graph has the same intersection diagram around every vertex and adjacency algebra $\A$ is not closed with respect to Hadamard product. If $\R=\{R_0,R_1,\ldots,R_7\}$ is the walk-regular partition and if $F_i$ $(0\le i\le 7)$ are adjacency matrices of $R_i$ $(0\le i\le 7)$, then for a fixed vertex $x$ of $\G$ we have $\P_i=\{z| (F_i)_{xz}=1\}$ $(0\le i\le 7)$.}
\label{gg}
\end{center}
}
\end{figure}

\begin{example}{\rm
\label{gh}
Assume that $\G$ is the graph from Figure \ref{gg}. Using the intersection diagram we can compute the adjacency matrix $B\in\Mat_{8\times 8}(\CC)$ of intersection diagram, and using $B$, we can compute the numbers $w_{ij}$ from Definition \ref{gl} (for example, a number $(B^\ell)_{\P_3,\P_0}$ is the number $w_{\ell 3}$ $(0\le\ell\le 7)$). Since we do not know the number of distinct eigenvalues, using Corollary \ref{gs} we know that $\G$ will not have more then $8$ of them. So we can compute the matrices $W$ and $Z$ with $8$ rows and $8$ columns. We have
$$
\underbrace{\left(\begin{array}{cccccccc|c}
1 & 0 & 0 & 0 & 0 & 0 & 0 & 0 & 1\\
0 & 1 & 1 & 0 & 0 & 0 & 0 & 0 & t\\
3 & 0 & 0 & 1 & 2 & 0 & 0 & 0 & t^2\\
0 & 6 & 7 & 0 & 0 & 2 & 3 & 0 & t^3\\
19 & 0 & 0 & 11 & 16 & 0 & 0 & 8 & t^4\\
0 & 46 & 51 & 0 & 0 & 30 & 35 & 0 & t^5\\
143 & 0 & 0 & 111 & 132 & 0 & 0 & 100 & t^6\\
0 & 386 & 407 & 0 & 0 & 322 & 343 & 0 & t^7
\end{array}\right)}_{=[W|\tt]}
\stackrel{\rm row}{\sim}
\underbrace{
\left(\begin{array}{cccccccc|c}
1 & 0 & 0 & 0 & 0 & 0 & 0 & 0 & p_0(t)\\
0 & 1 & 0 & 0 & 0 & 0 & -1 & 0 & p_1(t)\\
0 & 0 & 1 & 0 & 0 & 0 & 1 & 0 & p_2(t)\\
0 & 0 & 0 & 1 & 0 & 0 & 0 & 0 & p_3(t)\\
0 & 0 & 0 & 0 & 1 & 0 & 0 & 0 & p_4(t)\\
0 & 0 & 0 & 0 & 0 & 1 & 1 & 0 & p_5(t)\\
0 & 0 & 0 & 0 & 0 & 0 & 0 & 1 & p_6(t)\\
0 & 0 & 0 & 0 & 0 & 0 & 0 & 0 & *
\end{array}\right)
}_{=[Z\mid\pp(t)]}
$$
where polynomials $p_i(t)$ $(0\le i\le 6)$ are
$$
p_0(t)=1,\qquad
p_1(t)=\frac{1}{10}t^5 - \frac{3}{2} t^3 + \frac{27}{5} t,\qquad
p_2(t)=-\frac{1}{10} t^5 + \frac{3}{2}t^3 - \frac{22}{5} t,\qquad
$$
$$
p_3(t)=\frac{2}{15}t^6 - \frac{5}{3}t^4 + \frac{68}{15}t^2 - 1,\qquad
p_4(t)=-\frac{1}{15}t^6 + \frac{5}{6}t^4 - \frac{53}{30}t^2 - 1,\qquad
p_5(t)=\frac{1}{20}t^5 - \frac{1}{4}t^3 - \frac{4}{5}t,
$$
$$
p_6(t)=-\frac{1}{20} t^6 + \frac{3}{4} t^4 - \frac{27}{10} t^2 + 1.
$$
Since $\rank(W)=7$, $\G$ has $7$ distinct eigenvalues, which imply that the polynomial $p_7(t)$ is not important. Note that $A_0=p_0(A)$, $A_1=p_1(A)+p_2(A)$, $A_2=p_3(A)+p_4(A)$, $A_3=p_5(A)$ and $A_4=p_6(A)$. Therefore, every distance-$i$ matrix can be write as a polynomial in $A$ and $\sum_{i=0}^6 p_i(t)$ is the Hoffman polynomial. Thus, $\G$ is not a quotient-polynomial graph.
}\end{example}


\section{Some characterizations of quotient polynomial graphs}
\label{4a}

In this section we prove Theorems  \ref{1d} and  \ref{1h}.
First we prove Theorem \ref{1d}.
(The adjacency algebra of $\G$ is closed under Hadamard product if and only if $\G$ is a quotient-polynomial graph).

\medskip
\noindent
{\bf Proof of Theorem \ref{1d}.}
The proof of this claim follows from \cite[Theorem 4.1]{FQ}. Here we give an alternative proof for completeness and clarity.

Assume that $\G$ is a quotient-polynomial graph. Let $F_i$ $(0\le i\le d)$ denote the adjacency matrix of the equivalence class $R_i$ $(0\le i\le d)$ of a walk-regular partition $\R=\{R_0,R_1,\ldots,R_d\}$ of $X\times X$.
By definition, $\{I=F_0,F_1,\ldots,F_d\}$ is a linearly independent set such that $F_i\circ F_j=\delta_{ij}F_i$, and $\sum_{i=0}^d F_i=J$. Moreover since $F_i\in\A$ we have $\span\{F_0,F_1,\ldots,F_d\}\subseteq \A$. Thus, the vector space $\A$ is closed under both ordinary and Hadamard multiplication.

Conversely, assume that the vector space $\A$ is closed under both ordinary and Hadamard multiplication. By Theorem \ref{1c}, since $\G$ is a regular graph, the algebra $\A$ has the standard basis $\{I=F_0,F_1,\ldots,F_d\}$. Then, there exists scalars $\alpha_{ij}$ $(0\le i,j\le d)$ such that
\begin{equation}
\label{4b}
A^\ell = \sum_{j=0}^d \alpha_{\ell j} F_j \qquad (0\le\ell\le d).
\end{equation}
Now, by \eqref{4b}, if $u,v,y,z\in X$ are vertices such that $(F_i)_{uv}=1$ and $(F_i)_{yz}=1$ $(0\le i\le d)$, then the number of walks of length $\ell$ from $u$ to $v$, is equal to the number of walks of length $\ell$ from $y$ to $z$ $(0\le\ell\le d)$. This implies that the matrices $F_i$ correspond to the basis relations $R_i$ $(0\le i\le d)$, and that $\R=\{R_0,R_1,\ldots,R_d\}$ is a walk-regular partition of $X\times X$. Since $F_i\in\A$ the result follows.
\hfill$\hbox{\rule{3pt}{6pt}}$
\medskip

\begin{corollary}
Let $\G$ be a graph with adjacency matrix $A$ and $d+1$ distinct eigenvalues. If some matrix in $\{A^2,\ldots,A^d\}$ has more than $d+1$ distinct entries, then $\A$ is not closed under Hadamard product. 
\end{corollary}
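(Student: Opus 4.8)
The plan is to prove the contrapositive: assuming that $\A$ is closed under Hadamard multiplication, I will show that \emph{every} matrix in $\A$ --- and in particular each power $A^i$ --- has at most $d+1$ distinct entries. Since $\dim\A=d+1$, this is exactly the bound that the hypothesis violates, so the stated implication follows at once.

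First I would invoke Theorem \ref{1c}. Because $\G$ is regular with $d+1$ distinct eigenvalues and $\A$ is closed under Hadamard product, $\A$ admits the standard basis $\{F_0,F_1,\ldots,F_d\}$ of nonzero $(0,1)$-matrices that are mutually $\circ$-disjoint, $F_i\circ F_j=\delta_{ij}F_i$, and satisfy $\sum_{i=0}^d F_i=J$. The key structural consequence is that these conditions make the supports of $F_0,\ldots,F_d$ a partition of $X\times X$: for every position $(x,y)$ there is exactly one index $j$ with $(F_j)_{xy}=1$ and $(F_h)_{xy}=0$ for $h\ne j$.

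The crux of the argument is then immediate. Take any $M\in\A$ and write it in the standard basis as $M=\sum_{j=0}^d c_j F_j$. For a position $(x,y)$ lying in the support of $F_j$ we get $M_{xy}=c_j$, so $M$ is constant on each support and its set of distinct entries is contained in $\{c_0,c_1,\ldots,c_d\}$. Hence $M$ has at most $d+1$ distinct entries. Applying this to $M=A^i$, which lies in $\A=\span\{I,A,\ldots,A^d\}$ for every $i$, shows that each $A^i$ has at most $d+1$ distinct entries.

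Contrapositively, if some matrix among $A^2,\ldots,A^d$ has more than $d+1$ distinct entries, then $\A$ cannot carry such a standard basis, so $\A$ is not closed under Hadamard multiplication. I do not expect any genuine obstacle here: the entire content is the observation that a linear combination of disjoint $(0,1)$-matrices whose supports partition the positions of $J$ is piecewise constant, so the size of its entry-set is bounded by the number of basis matrices. One could equally route the argument through Theorem \ref{1d}, noting that Hadamard-closedness makes $\G$ quotient-polynomial and then using the very same standard basis.
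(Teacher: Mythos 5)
Your proposal is correct and is essentially the paper's own argument: the paper derives this corollary from Theorems \ref{1c}/\ref{1d} exactly as you do, namely Hadamard-closedness yields the standard basis of $d+1$ mutually disjoint $(0,1)$-matrices summing to $J$, so every matrix in $\A$ (in particular every power $A^i$) is constant on the supports of the $F_j$ and thus has at most $d+1$ distinct entries --- this is precisely the content of Corollary \ref{gs2} and its one-line proof. The only caveat, which you share with the paper, is that Theorem \ref{1c} is stated for regular graphs while the corollary omits that hypothesis; this is harmless under the paper's standing connectivity assumption (a diagonal matrix such as $A^2\circ I$ that is a polynomial in $A$ commutes with $A$, forcing equal degrees on adjacent vertices), but neither you nor the paper spells this out.
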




Now we prove Theorem \ref{1h}.
(A regular graph $\G$ with diameter $2$ and $4$ distinct eigenvalues is quotient-polynomial if and only if either
any two nonadjacent (respectively, adjacent) vertices have a constant number of common neighbours, and the number of common neighbours of any two adjacent (respectively, nonadjacent) vertices takes precisely two values.)

The proof can be seen as very nice application of the walk-regular partition from Section \ref{go}.

\bigskip
\noindent
{\bf Proof of Theorem \ref{1h}.}
\begin{itemize}
\item[$(\Rightarrow)$]
Assume that the vector space $\A=\span\{I,A,A^2,A^3\}$ is closed under Hadamard multiplication. By Theorem \ref{1c}, $\A$ has the standard basis $\{F_0,F_1,F_2,F_3\}$ consisting of $\circ$-idempotents. For every $\ell$ $(0\le\ell\le 3)$ there exist scalars $\alpha_{\ell i}$ $(0\le i\le 3)$ such that
$$
A^\ell=\alpha_{\ell 0} F_0 +\alpha_{\ell 1} F_1 +
\alpha_{\ell 2} F_2 + \alpha_{\ell 3} F_3.
$$
This implies that if $(F_i)_{yz}\ne 0$ then $(A^\ell)_{yz}=\alpha_{\ell i}$. Thus, for every $y,z,u,v\in X$, if $(F_i)_{yz}\ne 0$ and $(F_i)_{uv}\ne 0$ then
$$
(A^\ell)_{yz}=(A^\ell)_{uv}
\qquad
(0\le \ell\le 3).
$$
Now, we can obtain a walk-regular partition $\R=\{R_0,R_1,R_2,R_3\}$ (see Definition \ref{ga}) in the following way:
$$
(z,y)\in R_i
\qquad\Leftrightarrow\qquad
(F_i)_{zy}\ne 0
\qquad(0\le i\le 3).
$$
By Lemma \ref{gc}, all pairs of vertices in a given $R_i$ are at the same distance. This implies that if $(F_i)_{zy}\ne 0$ and $(F_i)_{uv}\ne 0$ then $\dist(z,v)=\dist(u,v)$ for every $z,y,u,v\in X$. Permute indices of the set $\{F_0,F_1,F_2,F_3\}$ so that $F_0=I$, and, for any $i\le j$ and $(F_i)_{zy}\ne 0$, $(F_j)_{uv}\ne 0$ we have $\dist(z,y)\le \dist(u,v)$. Since $\G$ is a graph of diameter $2$, $(F_3)_{zy}\ne 0$ implies $\dist(z,y)=2$. Since there exist scalars $\beta_i$ $(0\le i\le 3)$ such that
$$
A=\beta_0 I + \beta_1 F_1 + \beta_2 F_2+ \beta_3 F_3
$$
and since $A$ is $(0,1)$-matrix, we have $\beta_0=0$ and only one of the following two cases are possible: $A=F_1+F_2$ or $A=F_1$.

\medskip
\begin{itemize}
\item[{\it Case 1.}]
Assume that $A=F_1+F_2$. This yields $F_3=A_2$. Now, it is not hard to see that there exists scalars $k,\lambda_1,\lambda_2,\mu$ such that
$$
A^2= k I + \lambda_1 F_1 + \lambda_2 F_2 + \mu F_3,
$$
and the result follows.

\item[{\it Case 2.}]
Assume that $A=F_1$. This yields $F_2+F_3=A_2$. Now, there exists scalars $k,\lambda,\mu_1,\mu_2$ such that
$$
A^2= k I + \lambda F_1 + \mu_1 F_2 + \mu_2 F_3,
$$
and the result follows.
\end{itemize}

\item[$(\Leftarrow)$]
Assume that $\G$ has the property (i), that is  any two vertices at distance two have exactly $\mu$ common neighbours, and for every adjacent $x,y\in X$ we have $|\G(x)\cap\G(y)|\in\{\lambda_1,\lambda_2\}$. Define the matrices $\{F_0,F_1,F_2,F_3\}$ as $F_0:=I$, $F_1+F_2=A$ where
$$
(F_1)_{xy}=1
\qquad\mbox{if and only if}\qquad
\dist(x,y)=1 \mbox{ and } |\G(x)\cap\G(y)|=\lambda_1
\qquad (x,y\in X),
$$
and let $F_3=A_2$. Since $\G$ is regular $J\in\A$. Note that $I+A+A_2=J$ yields $A_2\in\A$, and with that $F_3\in\A$. Let $k$ denote valency  of $\G$. Computing $A^2$ we have
$$
A^2=kI+\lambda_1F_1+\lambda_2F_2+\mu A_2=
kI+\lambda_1F_1+\lambda_2(A-F_1)+\mu A_2
$$
which yields $F_1\in\A$. Since $F_2=A-F_1$ we also have $F_2\in\A$. By construction the set $\{F_0,F_1,F_2,F_3\}$ is linearly independent set consisting of $\circ$-idempotents. Thus we showed that $\span\{F_0,F_1,F_2,F_3\}\subseteq\A$. The result follows.

If we assume that $\G$ has the property (ii), the proof is similar as above (consider the set of $(0,1)$-matrices $\{I,A,F_2,F_3\}$ where $F_2+F_3=A_2$, and $(F_2)_{xy}=1$ if and only if $\dist(x,y)=2$ and $|\G(x)\cap\G(y)|=\mu_1$).
\hfill$\hbox{\rule{3pt}{6pt}}$
\end{itemize}

\medskip
The two families of graphs from Theorem \ref{1h} are in fact a subfamily of Deza graphs (see \cite{EF}). Note that, if $\G$ is a graph for which property (i) of Theorem \ref{1h} holds, then the distance-$2$ matrix of $\G$ is the adjacency matrix of $\ol{\G}$ (complement of $\G$, which have the property that any two adjacent vertices have a constant number of common neighbours, and the number of common neighbours of any two nonadjacent vertices takes precisely two values). With this in mind, it follows a result of Van Dam from \cite{ED}:

\begin{theorem}[{{\rm\cite[Theorem 5.1]{ED}}}]
\label{4f}
Let $\G$ be a connected regular graph with four distinct eigenvalues and diameter $2$. Then $\G$ is one of the relations of a $3$-class association scheme if and only if any two adjacent vertices have a constant number of common neighbours, and the number of common neighbours of any two nonadjacent vertices takes precisely two values.
\end{theorem}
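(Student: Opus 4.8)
The plan is to read this statement through the lens of the three results already established, since its right-hand condition is exactly property~(ii) of Theorem~\ref{1h}. The governing observation is the asymmetry exposed inside the proof of Theorem~\ref{1h}: when $\A=\span\{I,A,A^2,A^3\}$ is closed under Hadamard multiplication, the unique standard basis $\{F_0=I,F_1,F_2,F_3\}$ of Theorem~\ref{1c} satisfies $A=F_1$ under property~(ii), so that $A$ \emph{is} a single relation, whereas under property~(i) one has $A=F_1+F_2$, so that $A$ is a union of two relations and $\G$ is not itself a relation of the scheme. Throughout I would use that $\dim\A=4$, which holds because $\G$ has four distinct eigenvalues, and I would work with symmetric schemes, in keeping with the framework of the paper.

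For the ``if'' direction, assume property~(ii), i.e.\ adjacent vertices have a constant number of common neighbours while nonadjacent pairs realize exactly two values. By Theorem~\ref{1h} the algebra $\A$ is closed under Hadamard multiplication, so by Theorem~\ref{1c} it carries a standard basis $\{F_0=I,F_1,F_2,F_3\}$ of symmetric, mutually disjoint $(0,1)$-matrices summing to $J$ and closed under ordinary multiplication; this is precisely a symmetric $3$-class association scheme. Since under property~(ii) the adjacency matrix is itself one of these basis matrices, namely $A=F_1$, the graph $\G$ is one of the relations of this $3$-class scheme.

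For the ``only if'' direction, suppose $\G$ is a relation of a symmetric $3$-class association scheme with adjacency matrices $A_0=I,A_1,A_2,A_3$, Bose--Mesner algebra $\M=\span\{A_0,A_1,A_2,A_3\}$ of dimension $4$, and $A=A_j$ for some $j$. Because $\M$ is closed under ordinary multiplication and $A\in\M$, every power of $A$ lies in $\M$, whence $\A\subseteq\M$; since $\dim\A=4=\dim\M$, in fact $\A=\M$. Thus $\A$ is closed under Hadamard multiplication, so by Theorem~\ref{1d} the graph $\G$ is quotient-polynomial, and Theorem~\ref{1h} forces property~(i) or~(ii). To single out~(ii), I would invoke the uniqueness clause of Theorem~\ref{1c}: the matrices $\{A_0,\ldots,A_3\}$ form a basis of $\A=\M$ enjoying all the standard-basis properties, so they must coincide with $\{F_0,F_1,F_2,F_3\}$. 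As $A=A_j$ is a single one of these, it cannot equal the sum $F_1+F_2$ that property~(i) produces; hence property~(ii) holds.

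The main obstacle is exactly this final step. Both properties~(i) and~(ii) yield a $3$-class scheme, so the real content of the theorem is that only~(ii) makes $\G$ itself a relation. Making this precise rests on the uniqueness of the standard basis together with the linear independence of the $F_i$, which together rule out that a single relation equals a sum of two distinct standard-basis matrices. A minor point to verify is that the given scheme is genuinely $3$-class, i.e.\ $\dim\M=4$ rather than degenerate, which is guaranteed by $\G$ having four distinct eigenvalues; one may alternatively package the whole argument via the complement, using that $A_2=J-I-A\in\A$ interchanges properties~(i) and~(ii) between $\G$ and $\ol{\G}$.
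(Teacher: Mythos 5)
Your proof is correct, and it follows the paper's intended strategy of deriving Van Dam's theorem from Theorem~\ref{1h}; however, it is considerably more explicit than what the paper actually records. The paper gives no detailed argument for this statement: it only remarks that complementation swaps properties (i) and (ii) of Theorem~\ref{1h} --- if $\G$ satisfies (i), then the distance-$2$ matrix of $\G$ is the adjacency matrix of $\ol{\G}$, which satisfies (ii) --- and asserts that Van Dam's result ``follows.'' Your handling of the ``only if'' direction is precisely the content the paper leaves implicit, and your mechanism for it is different: instead of the complement duality, you show that the Bose--Mesner algebra $\M$ of \emph{any} $3$-class scheme having $A$ as a relation satisfies $\A\subseteq\M$, hence $\A=\M$ by dimension count, and then invoke the uniqueness clause of Theorem~\ref{1c} to force the scheme's relations to coincide with the standard basis $\{F_0,F_1,F_2,F_3\}$; since in case (i) one has $A=F_1+F_2$, linear independence rules out $A$ being a single relation, leaving case (ii). This is exactly the right way to handle the universal quantifier over all possible schemes, which the paper's complementation remark by itself does not address; what the paper's phrasing buys instead is brevity and the structural insight that the (i)-graphs are exactly the complements of the (ii)-graphs. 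Two cosmetic points: $\dim\M=4$ is guaranteed by the definition of a $3$-class scheme (four nonempty, pairwise disjoint $(0,1)$-matrices are automatically linearly independent), not by $\G$ having four distinct eigenvalues --- the eigenvalue count is what gives $\dim\A=4$, and both facts are needed for $\A=\M$. Also, your restriction to symmetric schemes costs nothing: once $\A=\M$, every relation is a polynomial in the symmetric matrix $A$, hence symmetric.
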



\section{The existence of an idempotent generator}
\label{5a}

In this section we prove Theorem \ref{1e}
(a given $F\in \{F_0,F_1,\ldots,F_d\}$ has $d+1$ distinct eigenvalues if and only if $\langle F_0,F_1,\ldots,F_d \rangle=\langle I,F,\ldots,F^d\rangle$).

\medskip
\noindent
{\bf Proof of Theorem \ref{1e}.}
We already know that, for any real symmetric matrix $B$ with $s+1$ distinct eigenvalues, the set $\{I,B,\ldots,B^s\}$ is a basis of the algebra $\{p(B)\mid p\in\RR[t]\}$.


\begin{itemize}
\item[$(\Leftarrow)$]
Assume that $\A=\span\{I,F,\ldots,F^d\}$. This yield that $\{I,F,\ldots,F^d\}$ is also a basis of $\A$, that is, it is maximal linearly independent set. Thus $F$ have $d+1$ distinct eigenvalues.

\item[$(\Rightarrow)$]
 Now assume that $F$ has $d+1$ distinct eigenvalues, and let $\F$ denote the algebra generated by the set $\{I,F^1,\ldots,F^d\}$. Since $\{I,F_1,\ldots,F_d\}$ is a basis of $\A$ we have that $F^i\in\A$ for every $i\in\NN$. This yields $\F\subseteq\A$, that is $\dim(\F)\le d+1$. Now since $F$ has $d+1$ distinct eigenvalues, $\dim(\F)=d+1$, and the result follows.
\hfill$\hbox{\rule{3pt}{6pt}}$
\end{itemize}

\begin{example}{\rm
Let $\G$ denote the bipartite $2$-walk-regular graph with diameter $4$ and $6$ distinct eigenvalues from \cite[Theorem 2]{JK}. By such a theorem, $\G$ generates an association scheme with $5$ classes. Let $\{A_0,A_1,\ldots,A_5\}$ denote the adjacency matrices of this association scheme. Considering its first eigenmatrix $P$ \cite[Section 3]{JK}, we can conclude that $A_1$ and $A_3$ have 6 different eigenvalues. Thus, both of these matrices generate the algebra $\A$ of $\G$, which is closed under Hadamard multiplication.
}\end{example}


\section{Further directions}
\label{6a}

Let $\G$ denote a quotient-polynomial graph with vertex set $X$, $d+1$ distinct eigenvalues, and let $\{I,F_1,\ldots,F_d\}$ be the standard basis of the adjacency algebra $\A$ of $\G$.

Since $\{E_0,E_1,\ldots,E_d\}$ is also a basis of $\A$, there exist numbers $q^h_{ij}$ such that
\begin{equation}
\label{6b}
E_i\circ E_j=\frac{1}{|X|}\sum_{h=0}^d q^h_{ij} E_h
\qquad(0\le i,j\le d).
\end{equation}
The numbers $q^h_{ij}$ are called the {\it Krein parameters} for $\G$ with respect to the ordering $E_0,E_1,\ldots,E_d$ of its basis of primitive idempotents. An ordering $E_0,E_1,\ldots,E_d$ is a {\it cometric $($$Q$-polynomial\/$)$ ordering} if the following conditions are satisfied:
\begin{enumerate}[label=\rm(Q\arabic*)]
\item
$q^h_{ij}=0$ whenever any one of the indices $i,j,h$ exceed the sum of the remaining two, and
\item
$q^h_{ij}>0$ when $0\le i,j,h\le d$ and any one of the indices equals the sum of the remaining two.
\end{enumerate}
We say that $\G$ is a {\it cometric} (or {\it $Q$-polynomial}) quotient-polynomial graph when such an ordering exists. In the future, we plan to study algebraic and combinatorial properties of cometric quotient-polynomial graphs. This $Q$-polynomial concept is taken from the theory of commutative association schemes. A good introduction to the topic of $Q$-polynomial structures for association schemes and distance-regular graphs can be found in \cite{GD}. For a new technique (and approach) about computations in Bose-Mesner algebras, which also deals with $Q$-polynomial case, we recommend \cite[Section 3]{WMS}.

Fix a ``base vertex'' $x\in X$. For each $i$ $(0\le i\le D)$ let $F^*_i=F^*_i(x)$ denote the diagonal matrix in $\Mat_X(\CC)$ with $(y,y)$-entries $(F^*_i)_{yy}=(F_i)_{xy}$. The {\it Terwilliger} (or {\it subconstituent}) algebra $\T=\T(x)$ of $\G$ with respect to $x$ is the subalgebra of $\Mat_X(\CC)$ generated by $\{I,F_1,\ldots,F_d,F^*_0,F^*_1,\ldots,F^*_D\}$. By a $T$-{\em module} we mean a subspace $\W$ of $\V=\CC^X$ such that $B\W \subseteq \W$ for all $B \in \T$. Let $\W$ denote a $T$-module. Then $\W$ is said to be {\em irreducible} whenever $\W$ is nonzero and $\W$ contains no $T$-modules other than $0$ and $\W$. In the future we plan to study irreducible $T$-modules of quotient-polynomial graph $\G$. This $T$-module concept is also taken from the theory of commutative association schemes \cite{T1, T3, T4}. For most recent research on the use of Terwilliger algebra in the study of $P$-polynomial association schemes (that is, using the Terwilliger algebra to study distance-regular graphs) see \cite{CW, MM1, MM2, MMP1, MMP2, MSq, MJV, MX, PS}.

Another possible line of research would be the study of `pseudo-quotient polynomial graphs', defined by using weighted regular partitions,
see \cite{Fei}.


{\small

}

 \end{document}